\newtheorem{theorem}{Theorem}[section]
\newtheorem{lemma}[theorem]{Lemma}
\theoremstyle{definition}
\theoremstyle{remark}
\newtheorem{remark}[theorem]{Remark}
\numberwithin{equation}{section}
\begin{document}

\title{Harnack inequality\\for the negative power Gaussian curvature flow}

\author{Yi Li}

\address{Department of Mathematics, Harvard University, Cambridge, MA, 02138}
\email{yili@math.harvard.edu}

\subjclass[2010]{Primary 53C44, 53C40}

\date{Received by the editors August 29, 2010}

\dedicatory{{\rm (Communicated by Jianguo Cao)}}

\keywords{Harnack inequality, negative power Gaussian curvature flow}

\begin{abstract} In this paper, we study the power of Gaussian curvature flow of a compact convex hypersurface and establish its Harnack inequality when the power
is negative. In the Harnack inequality, we require that the absolute value of the power is strictly positive and strictly less
than the inverse of the dimension of the hypersurface.
\end{abstract}

\maketitle

\section{Introduction}
The Harnack estimate or Harnack inequality plays an important role in geometric 
flows. For the heat
equation, P. Li and S.-T. Yau \cite{LY} obtained the corresponding Harnack inequality
by using the parabolic maximum principle. Hamilton \cite{H1, H2} proved a Harnack inequality for the Ricci flow and mean curvature flow for all dimensions. For a Harnack
inequality for $m$-power mean
curvature flow, we refer to \cite{A}, \cite{S} and \cite{W}, where $m$ is positive.
B. Chow \cite{C2} considered a Harnack inequality for $m$-power Gaussian
curvature flow for $m>0$.

In this paper, we consider the negative power Gaussian curvature flow of a compact convex hypersurface $F_{0}: M^{n}\to\mathbb{R}^{n+1}$,
\begin{equation}
\frac{\partial}{\partial t}F(x,t)=\frac{1}{K(x,t)^{b}}\cdot\nu(x,t), \
0<b<\frac{1}{n}; \
F(x,0)=F_{0}(x), \ x\in M^{n}.\label{1.1}
\end{equation}
Here $K$ is the Gaussian curvature and $\nu$ denotes the outward unit normal vector field. Using the similar argument in \cite{C2}, we obtain a Harnack inequality for
the flow (\ref{1.1}).

\begin{theorem} \label{t1.1}Suppose that $F_{0}: M^{n}\to\mathbb{R}^{n+1}$ is a compact convex hypersurface. If $0<b<\frac{1}{n}$, then
\begin{equation}
\frac{\partial}{\partial t}\left(\frac{1}{K(x,t)^{b}}\right)+\left|\nabla\left(\frac{1}{K(x,t)^{b}}\right)\right|^{2}_{h}
-\frac{nb}{(1-nb)t}\left(\frac{1}{K(x,t)^{b}}\right)\leq0,\label{1.2}
\end{equation}
where the notation $|\cdot|_{h}$ is defined in the next section.
\end{theorem}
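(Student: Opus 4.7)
The plan is to introduce the Harnack quantity
\[
Q(x,t) := \frac{\partial}{\partial t}\!\left(\frac{1}{K^b}\right) + \left|\nabla\!\left(\frac{1}{K^b}\right)\right|_h^2 - \frac{nb}{(1-nb)\,t}\cdot\frac{1}{K^b},
\]
and to show $Q \le 0$ on $(0,T)\times M^n$ by the parabolic maximum principle. The sharp coefficient $nb/(1-nb)$ is forced by the self-similar expanding sphere: if $r(t) = \bigl((1-nb)t\bigr)^{1/(1-nb)}$, then the concentric round sphere of radius $r(t)$ is an exact solution of \eqref{1.1} starting from the origin at $t=0$ (this is precisely the point where $0 < b < 1/n$ is needed, so that $1-nb > 0$ and this power is well defined). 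On this solution $Z := K^{-b} = r^{nb}$ is spatially constant with $\partial_t Z = \frac{nb}{(1-nb)t}Z$, so $Q \equiv 0$; the theorem thus asserts that the expanding sphere is the extremal case.

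The first step is to derive the evolution equations along $\partial_t F = K^{-b}\nu$. The standard computations give
\[
\frac{\partial g_{ij}}{\partial t} = \frac{2}{K^b}h_{ij}, \qquad \frac{\partial\nu}{\partial t} = -\nabla\!\left(\frac{1}{K^b}\right),
\]
together with the corresponding equation for $\partial_t h_{ij}$ in terms of $\nabla^2 Z$ and the Weingarten map. These combine to give a linear parabolic equation of the form $\partial_t Z = \mathcal{L}Z + (\text{zero-order in }Z)$, where $\mathcal{L}$ is a second-order elliptic operator whose principal symbol is the inverse of the second fundamental form. This symbol is precisely what defines the $h$-norm promised in the next section, so the gradient term $|\nabla Z|_h^2$ in $Q$ matches the natural gradient norm associated to $\mathcal{L}$.

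The main step is to compute $\partial_t Q$ and verify an evolution inequality of the schematic form
\[
\frac{\partial Q}{\partial t} \le \mathcal{L}Q + (\text{terms nonpositive where } Q = 0),
\]
proceeding along the lines of Chow's argument for the positive power case in \cite{C2}. This will require (i) commuting $\partial_t$ past $\nabla$ using the evolution of the Christoffel symbols induced by $\partial_t g_{ij} = 2K^{-b}h_{ij}$, (ii) substituting the evolution equation for $Z$ to eliminate higher time derivatives, and (iii) applying a Cauchy--Schwarz inequality to the symmetric tensor $\nabla^2 Z$ in the $h$-inner product to absorb the quadratic Hessian terms. Pinning down the precise coefficient $nb/(1-nb)$ in this bookkeeping is the delicate point and is where I expect the main obstacle to lie; the hypothesis $b < 1/n$ enters exactly here, since one balances a trace term proportional to $nbZ/t$ against a Hessian Cauchy--Schwarz bound and needs $1-nb$ to remain positive in the denominator.

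Once the evolution inequality for $Q$ is in hand, the rest is standard. As $t \downarrow 0$, the term $-nbZ/((1-nb)t)$ dominates while $Z$ stays uniformly bounded on $[0,T_1]\times M^n$ for small $T_1$ by short-time existence and the strict convexity of $F_0$, so $Q \to -\infty$ uniformly. The parabolic maximum principle on the compact manifold $M^n$ (endowed with its time-dependent metric $g(t)$) then forbids $Q$ from ever reaching zero, and $Q \le 0$ persists throughout $(0,T)\times M^n$, which is exactly the estimate \eqref{1.2}.
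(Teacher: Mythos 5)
Your overall strategy --- introduce the Harnack quantity $Q$, check that the expanding sphere $r(t)=((1-nb)t)^{1/(1-nb)}$ makes it vanish identically, and run the parabolic maximum principle using the blow-up of the term $-\frac{nb}{(1-nb)t}K^{-b}$ as $t\downarrow 0$ --- is indeed the strategy of the paper, and your preliminary evolution equations and the sanity check on the sphere are correct. But the proposal stops exactly where the proof begins: the entire content of the argument is the computation you defer in your ``main step,'' namely the evolution inequality for the Harnack quantity, and you yourself flag that pinning down the coefficient is ``where I expect the main obstacle to lie.'' As written this is an outline with the central lemma missing, not a proof.

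Two concrete points about where your sketch would go astray if executed literally. First, the Cauchy--Schwarz step is not applied to the Hessian $\nabla^2 Z$ alone. Writing the flow as $\partial_t F=-f(K)\nu$ with $f(x)=ax^{b'}$, $a=-1$, $b'=-b$, the paper introduces the full tensor $(P_t)_{ij}=\nabla_i\nabla_jf_t-(h_t)^{-1}_{k\ell}\nabla_k(h_t)_{ij}\nabla_\ell f_t+f_t(g_t)^{k\ell}(h_t)_{ik}(h_t)_{\ell j}$, which vanishes on self-similar solutions, and the dimensional constant comes from $|P_t|^2_{h_t}\ge\frac{1}{n}\mathbf{P}^2_t$ for the trace $\mathbf{P}_t=(h_t)^{-1}_{ij}(P_t)_{ij}$; combined with the reaction term $b'\mathbf{P}^2_t$ this yields $\partial_t\mathbf{P}_t\ge f'_tK_t\Box_t\mathbf{P}_t+2b'\langle\nabla f_t,\nabla\mathbf{P}_t\rangle_{h_t}+\left(\frac{1}{n}+b'\right)\mathbf{P}^2_t$, whence $\mathbf{P}_t\ge-\frac{1}{(\frac{1}{n}-b)t}$ and the hypothesis $\frac{1}{n}-b>0$. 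A Cauchy--Schwarz on $\nabla^2Z$ by itself does not absorb the cross terms of the form $\nabla h\cdot\nabla f$ that arise in differentiating the gradient part of $Q$. Second, the closing of the evolution equation is not routine bookkeeping: for a general speed $f(K)$ the evolution of $\mathbf{P}_t$ contains three additional reaction terms with coefficients $\alpha(K_t),\beta(K_t),\gamma(K_t)$, and the argument succeeds only because these vanish identically precisely when $f$ is a power law (Lemma \ref{l3.6}). Establishing this --- the content of Lemmas \ref{l3.1}--\ref{l3.4} and Theorem \ref{t3.5} --- is the proof; until it is carried out, the assertion that ``the rest is standard'' is not justified.
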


When $\frac{1}{n}\leq b\leq1$, some interesting results have been derived 
in \cite{L}, where the author considered $n=2$.

\section{Notation and evolution equations}

\subsection{Notation}
Suppose that $F: M^{n}\to\mathbb{R}^{n+1}$ is a hypersurface. The second fundamental form is given by
\begin{equation}
h_{ij}=-\left\langle\frac{\partial^{2}F}{\partial x^{i}\partial x^{j}},\nu\right\rangle,\label{2.1}
\end{equation}
where $\langle\cdot,\cdot\rangle$ denotes the standard metric on $\mathbb{R}^{n+1}$.
The induced metric
\begin{equation}
g_{ij}=\left\langle\frac{\partial F}{\partial x^{i}},
\frac{\partial F}{\partial x^{j}}\right\rangle\label{2.2}
\end{equation}
on $M$ gives us the mean curvature
\begin{equation}
H=g^{ij}h_{ij}\label{2.3}
\end{equation}
and the Gaussian curvature
\begin{equation}
K=\frac{{\rm det}(h_{ij})}{{\rm det}(g_{ij})}.\label{2.4}
\end{equation}

If $\alpha=\{\alpha_{i}\}$ and $\beta=\{\beta_{i}\}$ are $1$-forms and $s=\{s_{ij}\}$ is a symmetric positive
definite covariant $2$-tensor, we use the short notation
\begin{equation*}
\langle \alpha,\beta\rangle_{s}\equiv\langle \alpha_{i},\beta_{i}\rangle_{s}:=s^{-1}_{ij}\alpha_{i}\beta_{j},
\end{equation*}
where $(s^{-1}_{ij})$ is the inverse matrix of $(s_{ij})$. Similarly, if $A=\{\alpha_{ijk}\}$ and $B=\{\beta_{pqr}\}$ are covariant $3$-tensors, we define
\begin{equation*}
\langle A,B\rangle_{s}\equiv\langle A_{ijk},B_{ijk}\rangle_{s}:=s^{-1}_{ip}s^{-1}_{jq}s^{-1}_{kr}A_{ijk}B_{pqr}.\label{3.2.6}
\end{equation*}
Finally, we define the Laplacian-type operator by
\begin{equation}
\Box:=h^{-1}_{ij}\nabla_{i}\nabla_{j}.\label{2.5}
\end{equation}
Here $\nabla$ denotes the Levi-Civita connection of the induced metric $g$ on $M$.

Let $M^{n}$ be a convex hypersurface in $\mathbb{R}^{n+1}$, $\alpha=\{\alpha_{i}\}$
a $1$-form on $M^{n}$, and $\phi$ a smooth function on $M$. We have the following identities (see \cite{C1}
or \cite{C2}):
\begin{eqnarray}
R_{ijk\ell}&=&h_{i\ell}h_{jk}-h_{ik}h_{j\ell},\label{2.6}\\
\nabla_{i}h_{jk}&=&\nabla_{j}h_{ik},\label{2.7}\\
(\nabla_{i}\nabla_{j}-\nabla_{j}\nabla_{i})\alpha_{k}
&=&-R_{ijk\ell}g^{\ell p}\alpha_{p},\label{2.8}\\
\nabla_{i}\nabla_{j}F&=&-h_{ij}\nu,\label{2.9}\\
\nabla_{i}\nu&=&h_{ij}g^{jk}\nabla_{k}F,\label{2.10}\\
\nabla_{i}\nabla_{j}\nu&=&g^{k\ell}\nabla_{k}h_{ij}\cdot\nabla_{\ell}F
-g^{\ell k}h_{i\ell}h_{kj}\nu,\label{2.11}\\
\nabla_{i}\left(Kh^{-1}_{ij}\right)&=&0,\label{2.12}\\
\left(\nabla_{i}\Box-\Box\nabla_{i}\right)\phi&=&-\langle\nabla_{i}h_{jk},
\nabla_{j}\nabla_{k}\phi\rangle_{h}
-(n-1)h_{ij}g^{jk}\nabla_{k}\phi.\label{2.13}
\end{eqnarray}

\subsection{Evolution equations}
Now we consider a generalized Gaussian curvature flow
\begin{equation}
\frac{\partial}{\partial t}F(x,t)=-f(K(x,t))\cdot\nu(x,t), \ \ \ F(x,0)=F_{0}(x), \
x\in M^{n},\label{2.14}
\end{equation}
where $f: (0,+\infty)\to\mathbb{R}$ is a smooth function depending only on the Gaussian
curvature $K$, which satisfies $f'>0$ everywhere in order to guarantee 
a short time existence. Such a type of Gaussian curvature flow is called the {\it
$f$-Gaussian curvature flow}.

\begin{remark} \label{r2.1}For convenience, in what follows, we write
$f_{t}=f(K_{t})$ and $\partial_{t}=\frac{\partial}{\partial t}$.
\end{remark}

Under the $f$-Gaussian curvature flow, it is easy to verify the following evolution equations (compared with Lemma 3.1 in \cite{C2}), where $h_{t}=\{(h_{t})_{ij}\}$:
\begin{eqnarray}
\partial_{t}(g_{t})_{ij}&=&-2f_{t}(h_{t})_{ij},\label{2.15}\\
\partial_{t}\nu_{t}&=&\nabla f_{t} \ \ = \ \ f'_{t}
\cdot\nabla K_{t},\label{2.16}\\
\partial_{t}(h_{t})_{ij}&=&\nabla_{i}\nabla_{j}f_{t}-f_{t}(g_{t})^{k\ell}
(h_{t})_{ik}(h_{t})_{\ell j},\label{2.17}\\
\partial_{t}K_{t}&=&f'_{t}K_{t}\cdot\left(\Box_{t} K_{t}+
\frac{f''_{t}}{f'_{t}}|\nabla K_{t}|^{2}_{h_{t}}+\frac{f_{t}}{f'_{t}K_{t}}H_{t}K_{t}\right),\label{2.18}\\
\partial_{t}f_{t}&=&f'_{t}K_{t}\cdot\left[\Box_{t} f_{t}+H_{t}f_{t}\right],\label{2.19}\\
\partial_{t}H_{t}&=&\Delta_{t} f_{t}+f_{t}|h_{t}|^{2}_{g_{t}},\label{2.20}\\
\partial_{t}\Box_{t}&=&-\langle\nabla\nabla f_{t},
\nabla\nabla\rangle_{h_{t}}
+f_{t}\Delta_{t}+\left(2-n+\frac{f_{t}}{f'_{t}K_{t}}\right)\langle\Delta_{t} f_{t},
\Delta_{t}\rangle_{g}.\label{2.21}
\end{eqnarray}

\begin{remark} \label{r2.2}If $M^{n}$ is compact and convex, then $H=H_{0}>0$;
using the evolution equation (\ref{2.20}), we see that $H(x,t)=H_{t}(x)>0$ under the $f$-Gaussian curvature
flow. According to (\ref{2.4}), we conclude that $K(x,t)>0$ along the
$f$-Gaussian curvature flow. Therefore $\frac{1}{K_{t}}$ is well-defined.
\end{remark}

\section{Harnack inequality}
Motivated by the self-similar solutions in \cite{C2}, we define a time-dependent
tensor field $P_{t}=\{(P_{t})_{ij}\}$ by
\begin{equation}
(P_{t})_{ij}=\nabla_{i}\nabla_{j}f_{t}-(h_{t})^{-1}_{k\ell}
\nabla_{k}(h_{t})_{ij}\cdot\nabla_{\ell}f_{t}+f_{t}(g_{t})^{k\ell}(h_{t})_{ik}(h_{t})_{\ell j}.\label{3.1}
\end{equation}
Taking the trace of $(P_{t})_{ij}$ with respect to $(h_{t})_{ij}$, we set
\begin{equation}
\mathbf{P}_{t}=(h_{t})^{-1}_{ij}(P_{t})_{ij}.\label{3.2}
\end{equation}
Since $\nabla K_{t}=K_{t}(h_{t})^{-1}_{pq}\nabla (h_{t})_{pq}$ by (\ref{2.12}), we can rewrite
$\mathbf{P}_{t}$ as
\begin{equation}
\mathbf{P}_{t}=\Box_{t}f_{t}+f_{t}H_{t}-(h_{t})^{-1}_{ij}(h_{t})^{-1}_{k\ell}
\nabla_{k}(h_{t})_{ij}\cdot\nabla_{\ell}f_{t}=\Box_{t} f_{t}+f_{t}H_{t}-\frac{|\nabla f_{t}|^{2}_{h_{t}}}{f'_{t}K_{t}}.\label{3.3}
\end{equation}
\subsection{Evolution equation for $\mathbf{P}_{t}$}
In this subsection our task is to find the evolution equation for $\mathbf{P}_{t}$. 
Before doing this, we first write down some elementary formulas which will be used in
our complicated and tedious computation. Since $f$ is
smooth depending only on $K_{t}$, we have $\nabla f_{t}=f'_{t}\nabla K_{t}$ and
\begin{eqnarray}
\Box_{t} f_{t}&=&(h_{t})^{-1}_{ij}\nabla_{i}\nabla_{j}f_{t} \ \ = \ \ (h_{t})^{-1}_{ij}\nabla_{i}
\left(f'_{t}\nabla_{j}K_{t}\right)\nonumber\\
&=&(h_{t})^{-1}_{ij}\left[f''_{t}\nabla_{i}K_{t}\cdot\nabla_{j}K_{t}
+f'_{t}\nabla_{i}\nabla_{j}K_{t}\right]\label{3.4} \ \ = \ \ f'_{t}\Box_{t} K_{t}+f''_{t}|\nabla K_{t}|^{2}_{h_{t}}.
\end{eqnarray}
Hence, (\ref{2.18}) can be rewritten as
\begin{equation*}
\partial_{t}K_{t}=K_{t}\left(\Box_{t}f_{t}+f_{t}H_{t}\right).
\end{equation*}
Using $\nabla_{i}K_{t}=\nabla_{i}f_{t}/f'_{t}$, we obtain
\begin{equation}
\nabla_{i}\nabla_{j}K_{t}=\nabla_{i}\left(f'_{t}{}^{-1}\nabla_{i}f_{t}
\right)=-f'_{t}{}^{-3}f''_{t}\nabla_{i}f_{t}\cdot\nabla_{j}f_{t}
+f'_{t}{}^{-1}\nabla_{i}\nabla_{j}f_{t}.\label{3.5}
\end{equation}
The next useful formula is
\begin{eqnarray}
\Box_{t}(f'_{t})K_{t})&=&(h_{t})^{-1}_{ij}\nabla_{i}\left(f''_{t}\nabla_{j}K_{t}\cdot K_{t}+f'_{t}\cdot\nabla_{j}K_{t}\right)\nonumber\\
&=&(h_{t})^{-1}_{ij}\left[f''_{t}K_{t}\nabla_{i}\nabla_{j}K_{t}\right.+\left.(f'''_{t}K_{t}+2f''_{t})\nabla_{i}K_{t}\cdot\nabla_{j}K_{t}
+f'_{t}\nabla_{i}\nabla_{j}K_{t}\right]\nonumber\\
&=&(h_{t})^{-1}_{ij}\left[f'_{t}+f''_{t}K_{t}\right]
\left(\frac{1}{f'_{t}}\nabla_{i}\nabla_{j}f_{t}-\frac{f''_{t}}{f'_{t}{}^{3}}
\nabla_{i}f_{t}\cdot\nabla_{j}f_{t}\right)\nonumber\\
&&+(h_{t})^{-1}_{ij}\left[f'''_{t}K_{t}+2f''_{t}\right]
\frac{\nabla_{i}f_{t}\cdot\nabla_{j}f_{t}}{f'_{t}{}^{2}}\nonumber\\
&=&\left(1+\frac{f''_{t}K_{t}}{f'_{t}}\right)\Box_{t}f_{t}
+\left(\frac{f'''_{t}K_{t}}{f'_{t}{}^{2}}+\frac{f''_{t}}{f'_{t}{}^{2}}
-\frac{f''_{t}{}^{2}K_{t}}{f'_{t}{}^{3}}\right)|\nabla f_{t}|^{2}_{h_{t}}.
\label{3.6}
\end{eqnarray}

\begin{lemma} \label{l3.1}Under the $f$-Gaussian curvature flow, we have
\begin{eqnarray}
\partial_{t}\left(\Box_{t} f_{t}\right)&=&f'_{t}K_{t}\cdot\Box_{t}(\Box_{t}
 f_{t})+2\left(1+
\frac{f''_{t}K_{t}}{f'_{t}}\right)\langle\nabla f_{t},\nabla(\Box_{t} f_{t}\rangle_{h_{t}}\nonumber\\
&&+\left(1+\frac{f''_{t}K_{t}}{f'_{t}}\right)(\Box_{t} f_{t})^{2}+f'_{t}K_{t}\cdot\Box_{t}(H_{t}f(K_{t}))\nonumber\\
&&+2\left(1+\frac{f''_{t}K_{t}}{f'_{t}}\right)\langle\nabla f_{t},\nabla(H_{t}f_{t})\rangle_{h_{t}}+\left(1
+\frac{f''_{t}K_{t}}{f'_{t}}\right)H_{t}f_{t}\cdot\Box_{t} f_{t}\label{3.7}\\
&&+\left(\frac{f'''_{t}K_{t}}{f'_{t}{}^{2}}
+\frac{f''_{t}}{f'_{t}{}^{2}}-\frac{f''_{t}{}^{2}K_{t}}{f'_{t}{}^{3}}
\right)
|\nabla f_{t}|^{2}_{h_{t}}\left[\Box_{t} f_{t}+H_{t}f_{t}\right]\nonumber\\
&&-|\nabla\nabla f_{t}|^{2}_{h_{t}}+f_{t}\Delta_{t} f_{t}+\left(2-n+\frac{f_{t}}{f'_{t}K_{t}}\right)|\nabla f_{t}|^{2}_{g_{t}}.\nonumber
\end{eqnarray}
\end{lemma}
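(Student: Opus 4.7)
The cleanest way to attack this is to differentiate $\Box_t f_t = (h_t)^{-1}_{ij}\nabla^t_i\nabla^t_j f_t$ termwise, treating the three sources of $t$-dependence separately: the inverse second fundamental form $(h_t)^{-1}_{ij}$, the Christoffel symbols buried inside $\nabla^t_i\nabla^t_j$, and the function $f_t$ itself. Applying $\partial_t$ in a local chart gives the master identity
\begin{equation*}
\partial_{t}(\Box_{t}f_{t})
 = \bigl(\partial_{t}(h_{t})^{-1}_{ij}\bigr)\nabla_{i}\nabla_{j}f_{t}
 \;+\; \Box_{t}(\partial_{t}f_{t})
 \;-\; (h_{t})^{-1}_{ij}\bigl(\partial_{t}\Gamma^{k}_{ij}\bigr)\nabla_{k}f_{t},
\end{equation*}
and I would derive (\ref{3.7}) by working out each of the three summands and matching them with the lines on the right-hand side.

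\textbf{First summand.} Use (\ref{2.17}) and $\partial_{t}(h_{t})^{-1}_{ij}=-(h_{t})^{-1}_{ip}(h_{t})^{-1}_{jq}\partial_{t}(h_{t})_{pq}$. The $\nabla_{p}\nabla_{q}f_{t}$ piece of (\ref{2.17}) contracts against $\nabla_{i}\nabla_{j}f_{t}$ to give exactly $-|\nabla\nabla f_{t}|^{2}_{h_{t}}$, while the $f_{t}(g_{t})^{k\ell}(h_{t})_{pk}(h_{t})_{\ell q}$ piece produces $f_{t}\Delta_{t}f_{t}$ after the two $(h_{t})^{-1}$'s cancel one power of $h_{t}$ each. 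This reproduces the first two terms on the last line of (\ref{3.7}).

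\textbf{Third summand.} From (\ref{2.15}), standard variation formulas give $\partial_{t}\Gamma^{k}_{ij}=-g^{k\ell}\bigl(\nabla_{i}(f_{t}h_{j\ell})+\nabla_{j}(f_{t}h_{i\ell})-\nabla_{\ell}(f_{t}h_{ij})\bigr)$. Distributing the derivative and using the Codazzi symmetry (\ref{2.7}) to collapse the three $\nabla h$ contributions to a single $\nabla_{\ell}h_{ij}$, then contracting with $(h_{t})^{-1}_{ij}$ and invoking $\nabla K_{t}=K_{t}(h_{t})^{-1}_{ij}\nabla (h_{t})_{ij}$ (formula (\ref{2.12})) to turn the $\nabla h$ term into $\nabla f_{t}/(f'_{t}K_{t})$, I get
\begin{equation*}
-(h_{t})^{-1}_{ij}\bigl(\partial_{t}\Gamma^{k}_{ij}\bigr)\nabla_{k}f_{t}
 \;=\; \Bigl(2-n+\tfrac{f_{t}}{f'_{t}K_{t}}\Bigr)|\nabla f_{t}|^{2}_{g_{t}},
\end{equation*}
which is the last term of (\ref{3.7}).

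\textbf{Second (and main) summand.} Substitute $\partial_{t}f_{t}=f'_{t}K_{t}(\Box_{t}f_{t}+H_{t}f_{t})$ from (\ref{2.19}) and expand $\Box_{t}$ of the product via the identity $\Box_{t}(uv)=(\Box_{t}u)v+u\Box_{t}v+2\langle\nabla u,\nabla v\rangle_{h_{t}}$ with $u=f'_{t}K_{t}$ and $v=\Box_{t}f_{t}+H_{t}f_{t}$. The key simplifier is the observation $\nabla(f'_{t}K_{t})=f''_{t}K_{t}\nabla K_{t}+f'_{t}\nabla K_{t}=\bigl(1+\tfrac{f''_{t}K_{t}}{f'_{t}}\bigr)\nabla f_{t}$, which converts the inner-product terms to the $\langle\nabla f_{t},\nabla(\Box_{t}f_{t})\rangle_{h_{t}}$ and $\langle\nabla f_{t},\nabla(H_{t}f_{t})\rangle_{h_{t}}$ appearing in (\ref{3.7}), and formula (\ref{3.6}) for $\Box_{t}(f'_{t}K_{t})$, which then supplies the $(\Box_{t}f_{t})^{2}$, $H_{t}f_{t}\Box_{t}f_{t}$, and $|\nabla f_{t}|^{2}_{h_{t}}[\Box_{t}f_{t}+H_{t}f_{t}]$ coefficients with precisely the prefactors written in the lemma. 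The $f'_{t}K_{t}\Box_{t}(\Box_{t}f_{t})$ and $f'_{t}K_{t}\Box_{t}(H_{t}f_{t})$ terms come directly from the middle piece $u\Box_{t}v$.

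\textbf{Main obstacle.} There is no conceptual difficulty; the difficulty is entirely organizational. The danger is mis-bookkeeping the several prefactors built from $f'_{t},f''_{t},f'''_{t}$ and $K_{t}$, and overlooking that the second symmetry of $\nabla h$ in Codazzi is needed to knock three $\nabla h$ terms down to one. I would handle this by first isolating the miraculous identity $\nabla(f'_{t}K_{t})=(1+f''_{t}K_{t}/f'_{t})\nabla f_{t}$, which keeps the coefficients tractable, and only then expanding (\ref{3.6}) at the very end so that the $|\nabla f_{t}|^{2}_{h_{t}}$-line falls out in the exact form stated.
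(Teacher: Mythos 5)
Your proposal is correct and follows essentially the same route as the paper: both rest on the splitting $\partial_{t}(\Box_{t}f_{t})=(\partial_{t}\Box_{t})f_{t}+\Box_{t}(\partial_{t}f_{t})$ and expand $\Box_{t}\bigl[f'_{t}K_{t}(\Box_{t}f_{t}+H_{t}f_{t})\bigr]$ by the product rule together with $\nabla(f'_{t}K_{t})=\bigl(1+\tfrac{f''_{t}K_{t}}{f'_{t}}\bigr)\nabla f_{t}$ and formula (\ref{3.6}). The only difference is that you re-derive the commutator term $(\partial_{t}\Box_{t})f_{t}$ from the variations of $(h_{t})^{-1}_{ij}$ and of the Christoffel symbols, whereas the paper simply quotes the pre-established evolution equation (\ref{2.21}); your derivation of that term is also correct.
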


\begin{proof} From $\partial_{t}(\Box_{t} f_{t})=\left(\partial_{t}\Box_{t}
\right)f_{t}+\Box_{t}\left(\partial_{t}f_{t}\right)$, we get
\begin{equation*}
\partial_{t}(\Box_{t} f_{t})=-|\nabla\nabla f_{t}|^{2}_{h_{t}}
+f_{t}\Delta_{t} f_{t}+\left(2-n+\frac{f_{t}}{f'_{t}K_{t}}\right)|\nabla f_{t}|^{2}_{g_{t}}+\Box_{t}\left[f'_{t}K_{t}(\Box_{t} f_{t}+H_{t}
f_{t})\right].
\end{equation*}
Now we evaluate the last term:
\begin{eqnarray*}
\Box_{t}\left[f'_{t}K_{t}(\Box_{t} f_{t}+H_{t}f_{t})\right]&=&\Box_{t}(f'_{t}K_{t})\cdot(\Box_{t} f_{t}+H_{t}f_{t})+f'_{t}K_{t}\cdot\Box_{t}(\Box_{t} f_{t}+H_{t}f_{t})\\
&&+2\langle\nabla(f'_{t}K_{t}),\nabla(\Box_{t} f_{t}+H_{t}f_{t})\rangle_{h_{t}}\\
&=&\left(1+\frac{f''_{t}K}{f'_{t}}\right)\Box_{t} f_{t}\left(\Box_{t}f_{t}+H_{t}f_{t}\right)\\
&&+\left(\frac{f'''_{t}K_{t}}{f'_{t}{}^{2}}
+\frac{f''_{t}}{f'_{t}{}^{2}}-\frac{f''_{t}{}^{2}K_{y}}{f'_{t}{}^{3}}
\right)|\nabla f_{t}|^{2}_{h_{t}}(\Box_{t} f_{t}+H_{t}f_{t})\\
&&+f'_{t}K_{t}\cdot\Box_{t}(\Box_{t} f_{t})+f'_{t}K_{t}\cdot\Box_{t}(H_{t}f_{t})\\
&&+2\left\langle\left(1+\frac{f''_{t}K_{t}}{f'_{t}}\right)\nabla f_{t},
\nabla(\Box_{t} f_{t})+\nabla(H_{t}f_{t})\right\rangle_{h_{t}}.
\end{eqnarray*}
Simplifying the above and plugging into the expression of
$\partial_{t}(\Box_{t} f_{t})$, we obtain the required result.
\end{proof}

\begin{lemma} \label{l3.2}Under the $f$-Gaussian curvature flow, we have
\begin{eqnarray*}
\partial_{t}\left(\frac{|\nabla f_{t}|^{2}_{h}}{f'_{t}K_{t}}\right)&=&
(f'_{t}K_{t})\Box_{t}\left(\frac{|\nabla f_{t}|^{2}_{h_{t}}}{f'_{t}K_{t}}\right)+2\left(1+\frac{f''_{t}K_{t}}{f'_{t}}\right)\left\langle\nabla f_{t},\nabla
\left(\frac{|\nabla f_{t}|^{2}_{h_{t}}}{f'_{t}K_{t}}\right)\right\rangle_{h_{t}}\\
&&-2|\nabla\nabla f_{t}|^{2}_{h_{t}}+2\langle\nabla_{i}(h_{t})_{jk},
\nabla_{i}f_{t}\cdot
\nabla_{j}\nabla_{k}f_{t}\rangle_{h_{t}}\\
&&+2f_{t}\langle\nabla H_{t},\nabla f_{t}\rangle_{h_{t}}+\left[1+\left(1+\frac{f''_{t}K_{t}}{f'_{t}}
\right)\frac{f_{t}}{f'_{t}K_{t}}\right]H_{t}|
\nabla f_{t}|^{2}_{h_{t}}\\
&&-\langle\nabla_{i}f\cdot\nabla_{j}h_{kl},\nabla_{j}f\cdot\nabla_{i}h_{kl}\rangle_{h}
+\left(1+\frac{f''_{t}K_{t}}{f'_{t}}\right)\frac{2\Box_{t} f_{t}|\nabla f_{t}|^{2}_{g}}{f'_{t}K_{t}}\\
&&+\left(\frac{f}{f'K}+2-n\right)|\nabla f|^{2}_{g}-\left[\frac{f''_{t}{}^{2}}{f'_{t}{}^{4}}
-\frac{f'''_{t}}{f'_{t}{}^{3}}+\frac{1}{(f'_{t}K_{t})^{2}}\right]|\nabla f_{t}
|^{4}_{h_{t}}.
\end{eqnarray*}
\end{lemma}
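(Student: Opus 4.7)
Write $Q_t := |\nabla f_t|^2_{h_t}/(f'_t K_t)$ for the target quantity. The strategy parallels the proof of Lemma 3.1: I will first expand $\partial_t Q_t$ by the quotient rule using the evolution equations from Section 2, and then reshape the result into the ``heat-operator'' form $(f'_t K_t)\Box_t Q_t + 2(1+f''_t K_t/f'_t)\langle \nabla f_t,\nabla Q_t\rangle_{h_t}$ plus explicit lower-order terms. The organizing principle is that this leading structure mirrors the linearization $\partial_t f_t = f'_t K_t(\Box_t f_t + H_t f_t)$ from (2.19), with the gradient coupling arising from $\nabla(f'_t K_t) = (1+f''_t K_t/f'_t)\nabla f_t$.

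\emph{Step 1 (direct differentiation).} Expanding $|\nabla f_t|^2_{h_t} = (h_t)^{-1}_{ij}\nabla_i f_t\nabla_j f_t$ and using (2.17) for $\partial_t(h_t)_{ij}$ together with $\partial_t(\nabla_i f_t) = \nabla_i(\partial_t f_t)$, I obtain
\begin{equation*}
\partial_t|\nabla f_t|^2_{h_t} = 2\langle \nabla f_t,\nabla(\partial_t f_t)\rangle_{h_t} + f_t|\nabla f_t|^2_{g_t} - (h_t)^{-1}_{ik}(h_t)^{-1}_{jl}\nabla_k\nabla_l f_t\,\nabla_i f_t\nabla_j f_t,
\end{equation*}
into which (2.19) substitutes for $\partial_t f_t$. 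The denominator is handled via $\partial_t(f'_t K_t) = (f'_t + f''_t K_t)\,K_t(\Box_t f_t + H_t f_t)$, a direct consequence of the rewritten (2.18). Assembling both pieces via the quotient rule yields a preliminary expression for $\partial_t Q_t$ whose second-order spatial content appears only through $\nabla(\Box_t f_t + H_t f_t)$-type terms.

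\emph{Step 2 (Bochner and matching).} To expose the heat structure, I compute $\Box_t|\nabla f_t|^2_{h_t}$ in two ways. On one hand, the Leibniz identity gives
\begin{equation*}
\Box_t\bigl(Q_t\cdot f'_t K_t\bigr) = (f'_t K_t)\Box_t Q_t + Q_t\,\Box_t(f'_t K_t) + 2\bigl(1+\tfrac{f''_t K_t}{f'_t}\bigr)\langle \nabla f_t,\nabla Q_t\rangle_{h_t},
\end{equation*}
with $\Box_t(f'_t K_t)$ supplied by (3.6). On the other hand, a Bochner-type expansion using (2.13) to commute $\Box_t$ past $\nabla$, and (2.8) together with the Gauss equation (2.6) for the commutator of second derivatives, writes $\Box_t|\nabla f_t|^2_{h_t}$ as $2\langle \nabla f_t,\nabla\Box_t f_t\rangle_{h_t} + 2|\nabla\nabla f_t|^2_{h_t}$ plus curvature corrections (the $(n-1)h_{ij}g^{jk}\nabla_k\phi$ piece and the $\nabla_i(h_t)_{jk}$ cross-term from (2.13), together with Gauss-equation contributions producing $|\nabla f_t|^2_{g_t}$ and $H_t$ terms). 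Equating the two expressions solves for $(f'_t K_t)\Box_t Q_t + 2(1+f''_t K_t/f'_t)\langle \nabla f_t,\nabla Q_t\rangle_{h_t}$, and subtracting this from the Step 1 formula for $\partial_t Q_t$ should collapse the remainder precisely into the stated right-hand side.

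\emph{Main obstacle.} The real work is bookkeeping in two places. First, the $\nabla_i(h_t)_{jk}$ cross-terms arise with competing signs from the Bochner step (via (2.13)) and from the commutator that appears when moving $\Box_t$ through $\nabla$; they must combine to produce exactly $2\langle\nabla_i(h_t)_{jk},\nabla_i f_t\cdot\nabla_j\nabla_k f_t\rangle_{h_t} - \langle\nabla_i f\cdot\nabla_j h_{kl},\nabla_j f\cdot\nabla_i h_{kl}\rangle_h$. Second, the quartic coefficient $-[f''_t{}^2/f'_t{}^4 - f'''_t/f'_t{}^3 + 1/(f'_t K_t)^2]$ on $|\nabla f_t|^4_{h_t}$ must be assembled from three separate contributions: the $\nabla\nabla f_t$-correction in the Step 1 identity above, the $-Q_t^2/(f'_t K_t)\cdot \partial_t(f'_t K_t)$ piece of the quotient rule, and the explicit $f'''_t K_t/f'_t{}^2 + f''_t/f'_t{}^2 - f''_t{}^2 K_t/f'_t{}^3$ coefficients in (3.6). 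Care must also be taken to avoid double-counting the $\Box_t f_t \cdot |\nabla f_t|^2$ cross terms generated by $\nabla(\partial_t f_t)$ and by $\Box_t(f'_t K_t)$.
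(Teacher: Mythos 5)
Your overall architecture is the same as the paper's: compute $\partial_{t}\bigl(|\nabla f_{t}|^{2}_{h_{t}}/(f'_{t}K_{t})\bigr)$ directly from the evolution equations, compute the $\Box_{t}$ of the same quantity, and match. Your Step 1 identity for $\partial_{t}|\nabla f_{t}|^{2}_{h_{t}}$ is correct, and your reorganization of the Laplacian step --- expanding $\Box_{t}(Q_{t}\cdot f'_{t}K_{t})$ by Leibniz and solving for $(f'_{t}K_{t})\Box_{t}Q_{t}+2(1+f''_{t}K_{t}/f'_{t})\langle\nabla f_{t},\nabla Q_{t}\rangle_{h_{t}}$ --- is algebraically equivalent to the paper's direct expansion of $\Box_{t}Q_{t}$ (the paper instead manufactures the $\langle\nabla f_{t},\nabla Q_{t}\rangle_{h_{t}}$ term at the very end by substituting an identity for $\langle\nabla_{i}\nabla_{j}f_{t},\nabla_{i}f_{t}\nabla_{j}f_{t}\rangle_{h_{t}}$). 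So far this is a legitimate, if slightly repackaged, version of the same computation.

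The genuine gap is in Step 2. Because the norm $|\nabla f_{t}|^{2}_{h_{t}}$ is taken with respect to $(h_{t})^{-1}$ rather than $g^{-1}$, any expansion of $\Box_{t}|\nabla f_{t}|^{2}_{h_{t}}$ unavoidably produces the term $\langle\Box_{t}(h_{t})_{ij},\nabla_{i}f_{t}\nabla_{j}f_{t}\rangle_{h_{t}}$ (through $\Box_{t}(h_{t})^{-1}_{ij}$), and this term does not appear in the final formula. Eliminating it requires the Simons-type elliptic identity
\begin{equation*}
\Box_{t}(h_{t})_{ij}=\frac{\nabla_{i}\nabla_{j}f_{t}}{f'_{t}K_{t}}
-\Bigl(1+\frac{f''_{t}K_{t}}{f'_{t}}\Bigr)\frac{\nabla_{i}f_{t}\nabla_{j}f_{t}}{(f'_{t}K_{t})^{2}}
+\langle\nabla_{i}(h_{t})_{k\ell},\nabla_{j}(h_{t})_{k\ell}\rangle_{h_{t}}
-H_{t}(h_{t})_{ij}+n(g_{t})^{k\ell}(h_{t})_{ik}(h_{t})_{\ell j},
\end{equation*}
which the paper imports from \cite{C1}. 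Your plan attributes all ``curvature corrections'' to (2.13), (2.8) and the Gauss equation (2.6); those identities only govern commutators of derivatives of functions and $1$-forms and cannot produce the display above. The missing ingredients are the Codazzi equation (2.7) and the log-determinant relation $\nabla K_{t}=K_{t}(h_{t})^{-1}_{pq}\nabla(h_{t})_{pq}$ coming from (2.12), which together convert $\Box_{t}(h_{t})_{ij}$ into $\nabla\nabla f_{t}$ and $\nabla f_{t}\otimes\nabla f_{t}$ terms. Several pieces of the stated right-hand side --- the $n|\nabla f_{t}|^{2}_{g_{t}}$ contribution that turns $-2(n-1)$ into $2-n$, part of the $H_{t}|\nabla f_{t}|^{2}_{h_{t}}$ coefficient, and the coefficient of $\langle\nabla_{i}f_{t}\cdot\nabla_{j}(h_{t})_{k\ell},\nabla_{j}f_{t}\cdot\nabla_{i}(h_{t})_{k\ell}\rangle_{h_{t}}$ --- originate precisely from this identity, so without it the computation cannot close.
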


\begin{proof} The proof is similar to that in \cite{C2}. We observe first that
\begin{eqnarray*}
\partial_{t}\left(\frac{|\nabla f_{t}|^{2}_{h_{t}}}{f'_{t}K_{t}}\right)&=&\partial_{t}
\left(f'_{t}{}^{-1}K^{-1}_{t}(h_{t})^{-1}_{ij}\nabla_{i}f_{t}\nabla_{j}f_{t}\right)\\
&=&-\frac{1}{f'_{t}{}^{2}}\left(\frac{f''_{t}}{f'_{t}K_{t}}+\frac{1}{K^{2}_{t}}
\right)f'_{t}K_{t}(\Box_{t} f_{t}+H_{t}f_{t})|\nabla f_{t}|^{2}_{h_{t}}
\\
&&+2(f'_{t}K_{t})^{-1}\langle\nabla(f'_{t}K_{t}(\Box_{t} f_{t}+H_{t}f_{t})),\nabla f_{t}\rangle_{h_{t}}\\
&&-(f'_{t}K_{t})^{-1}(h_{t})^{-1}_{ik}(h_{t})^{-1}_{j\ell}
\left(\nabla_{k}\nabla_{\ell}f_{t}-f_{t}g^{pq}h_{kp}h_{q\ell}\right)\nabla_{i}f_{t}
\nabla_{j}f_{t}\\
&=&-\frac{K_{t}}{f'_{t}}\left(\frac{f''_{t}}{f'_{t}K_{t}}+\frac{1}{K^{2}_{t}}
\right)
\left(\Box_{t} f_{t}+H_{t}f_{t}\right)|\nabla f_{t}|^{2}_{h_{t}}+\frac{f_{t}
|\nabla f_{t}|^{2}_{g_{t}}}{f'_{t}K_{t}}\\
&&-(f'_{t}K_{t})^{-1}\langle\nabla_{i}\nabla_{j}f_{t},\nabla_{i}f_{t}
\nabla_{j}f_{t}\rangle_{h_{t}}+2\langle\nabla(\Box_{t} f_{t}+H_{t}f_{t}),
\nabla f_{t}\rangle_{h_{t}}\\
&&+2(f'_{t}K_{t})^{-1}\langle\nabla(f'_{t}K_{t}),\nabla f_{t}
\rangle_{h_{t}}(\Box_{t} f_{t}+H_{t}f_{t})\\
&=&\left(1+\frac{f''_{t}K_{t}}{f'_{t}}\right)\frac{(\Box_{t} f_{t}
+H_{t}f_{t})|\nabla f_{t}|^{2}_{h_{t}}}{f'_{t}K_{t}}
-\frac{\langle\nabla_{i}\nabla_{j}f_{t},\nabla_{i}f_{t}
\nabla_{j}f_{t}\rangle_{h}}{f'_{t}K_{t}}\\
&&+\frac{f_{t}}{f'_{t}K_{t}}|\nabla f_{t}|^{2}_{g_{t}}
+2\langle\nabla(\Box_{t} f_{t}+H_{t}f_{t}),\nabla f_{t}\rangle_{h_{t}}.
\end{eqnarray*}
On the other hand, we compute the Laplacian of $\frac{|\nabla f_{t}
|^{2}_{h_{t}}}{f'_{t}K_{t}}$ with respect to $(h_{t})_{ij}$:
\begin{eqnarray*}
\Box_{t}\left((f'_{t}K_{t})^{-1}|\nabla f_{t}|^{2}_{h_{t}}\right)&=&\Box_{t}\left((f'_{t}K_{t})^{-1}\right)|\nabla f_{t}|^{2}_{h_{t}}
+(f'_{t}K_{t})^{-1}\cdot\Box_{t} (h_{t})^{-1}_{ij}
\cdot\nabla_{i}f_{t}\nabla_{j}f_{t}\\
&&+(h_{t})^{-1}_{ij}\frac{2\Box_{t}(\nabla_{i}f_{t})
\cdot\nabla_{j}f_{t}
+2\left\langle\nabla_{k}\nabla_{i}f_{t},\nabla_{k}\nabla_{j}f_{t}
\right\rangle_{h_{t}}}{f'_{t}K_{t}}\\
&&+4\left\langle\nabla_{k}h^{-1}_{ij},\nabla_{k}\nabla_{i}f\cdot\nabla_{j}f
\right\rangle_{h}(f'K)^{-1}\\
&&+2\left\langle\nabla_{k}(f'_{t}K_{t})^{-1},\nabla_{k}(h_{t})^{-1}_{ij}
\cdot\nabla_{i}f_{t}\nabla_{j}f_{t}\right\rangle_{h_{t}}\\
&&+4\left\langle\nabla_{k}(f'_{t}K_{t})^{-1},(h_{t})^{-1}_{ij}
\nabla_{k}\nabla_{i}f_{t}\cdot\nabla_{j}f_{t}\right\rangle_{h_{t}}\\
&=&\Box_{t}\left((f'_{t}K_{t})^{-1}\right)|\nabla f_{t}|^{2}_{h_{t}}+
(f'_{t}K_{t})^{-1}\cdot\Box (h_{t})^{-1}_{ij}\cdot\nabla_{i}f_{t}\nabla_{j}f_{t}
\\
&&+2(f'_{t}K_{t})^{-1}\langle\Box_{t}(\nabla f_{t}),\nabla f_{t}\rangle_{h_{t}}+2(f'_{t}K_{t})^{-1}|\nabla_{i}\nabla_{j}f_{t}|^{2}_{h_{t}}\\
&&+2\left\langle\nabla_{k}(f'_{t}K_{t})^{-1},\nabla_{k}(h_{t})^{-1}_{ij}
\right\rangle_{h_{t}}
\cdot\nabla_{i}f_{t}\nabla_{j}f_{t}\\
&&+4\left\langle\nabla_{k}(f'_{t}K_{t})^{-1}\nabla_{i}f_{t},\nabla_{k}
\nabla_{i}f_{t}\right\rangle_{h_{t}}\\
&&-4(f'_{t}K_{t})^{-1}\langle\nabla_{k}(h_{t})_{ij},\nabla_{k}\nabla_{i}f_{t}
\cdot\nabla_{j}f_{t}\rangle_{h_{t}}.
\end{eqnarray*}
We compute some elementary formulas which will be used later. Note that
\begin{eqnarray*}
\nabla(K^{-1}_{t})&=&-K^{-2}_{t}\nabla K_{t} \ \ = \ \
-\frac{\nabla f_{t}}{f'_{t}K^{2}_{t}},\\
\nabla(f'_{t}{}^{-1})&=&-f'_{t}{}^{-2}\nabla f'_{t} \ \ = \ \
-\frac{f''_{t}}{f'_{t}{}^{3}}\nabla f_{t}.
\end{eqnarray*}
Therefore
\begin{eqnarray*}
\Box_{t}(K^{-1}_{t})&=&(h_{t})^{-1}_{ij}\left(2K^{-3}_{t}\nabla_{i}K_{t}\cdot\nabla_{j}K_{t}
-K^{-2}_{t}\nabla_{i}\nabla_{j}K_{t}\right)\\
&=&(h_{t})^{-1}_{ij}\left[\frac{2}{K^{3}_{t}}\frac{\nabla_{i}f_{t}
\cdot\nabla_{j}f_{t}}{f'_{2}{}^{2}}
-\frac{1}{K^{2}_{t}}\left(\frac{\nabla_{i}\nabla_{j}f_{f}}{f'_{t}}
-\frac{f''_{t}}{f'_{t}{}^{3}}\nabla_{i}f_{t}\nabla_{j}f_{t}\right)\right]\\
&=&\frac{2}{f'_{t}{}^{2}K^{3}_{t}}|\nabla f_{t}|^{2}_{h_{t}}
-\frac{1}{f'_{t}K^{2}_{t}}\Box_{t} f_{t}+\frac{f''_{t}}{f'_{t}{}^{3}K^{2}_{t}}
|\nabla f_{t}|^{2}_{h_{t}}\\
&=&\left(\frac{2}{K_{t}}+\frac{f''_{t}}{f'_{t}}\right)\frac{1}{(f'_{t}K_{t})^{2}}|\nabla f_{t}|^{2}_{h_{t}}
-\frac{1}{f'_{t}K^{2}_{t}}\Box_{t} f_{t},\\
\Box_{t}(f'_{t}{}^{-1})&=&(h_{t})^{-1}_{ij}\nabla_{i}\left(-\frac{f''_{t}}{f'_{t}{}^{3}}\nabla_{j}
f_{t}\right)\\
&=&\left(\frac{3f''_{t}{}^{2}}{f'_{t}{}^{5}}-\frac{f'''_{t}}{f'_{t}{}^{4}}\right)|\nabla f_{t}|^{2}_{h_{t}}-\frac{f''_{t}}{f'_{t}{}^{3}}\Box_{t} f_{t},
\end{eqnarray*}
and
\begin{equation*}
\left\langle\nabla(f'_{t}{}^{-1}),\nabla(K^{-1}_{t})\right\rangle_{h_{t}}
=\frac{f''_{t}}{f'_{t}{}^{4}K^{2}_{t}}|\nabla f_{t}|^{2}_{h_{t}}.
\end{equation*}
Using these equations, we arrive at
\begin{eqnarray*}
\Box_{t}\left((f'_{t}K_{t})^{-1}\right)&=&\Box_{t}(f'_{t}{}^{-1})\cdot K^{-1}_{t}+f'_{t}{}^{-1}\cdot\Box_{t}(K^{-1}_{t})
+2\left\langle\nabla f'_{t}{}^{-1},\nabla K^{-1}_{t}\right\rangle_{h_{t}}\\
&=&\left(\frac{3f''_{t}{}^{2}}{K_{t}f'_{t}{}^{5}}-\frac{f'''_{t}}{K_{t}f'_{t}{}^{4}}\right)|\nabla f_{t}|^{2}_{h_{t}}
-\frac{f''_{t}}{K_{t}f'_{t}{}^{3}}\Box_{t} f_{t}\\
&&+\left(\frac{2}{f'_{t}{}^{3}K^{3}_{t}}+\frac{f''_{t}}{f'_{t}{}^{4}K^{2}_{t}}
\right)|\nabla f_{t}|^{2}_{h_{t}}-\frac{1}{f'_{t}{}^{2}K^{2}_{t}}\Box_{t} f_{t}
+\frac{2f''_{t}}{f'_{t}{}^{4}K^{2}_{t}}|\nabla f_{t}|^{2}_{h_{t}}\\
&=&-\left(1+\frac{f''_{t}K_{t}}{f'_{t}}\right)\frac{1}{f'_{t}{}^{2}K^{2}_{t}}
\Box_{t} f_{t}\\
&&+\left(\frac{3f''_{t}{}^{2}}{f'_{t}{}^{4}}-\frac{f'''_{t}}{f'_{t}{}^{3}}
+\frac{2}{f'_{t}{}^{2}K^{2}_{t}}+\frac{3f''_{t}}{f'_{t}{}^{3}K_{t}}\right)
\frac{|\nabla f_{t}|^{2}_{h_{t}}}{f'_{t}K_{t}}.
\end{eqnarray*}
The Laplacian of $(h_{t})^{-1}_{ij}$ with respect to $(h_{t})_{ij}$ is given by
\begin{eqnarray*}
\Box\left((h_{t})^{-1}_{ij}\right)&=&(h_{t})^{-1}_{k\ell}\nabla_{k}
\left(-(h_{t})^{-1}_{ip}(h_{t})^{-1}_{jq}\nabla_{\ell}(h_{t})_{pq}\right) \ \ = \ \ -(h_{t})^{-1}_{ip}(h_{t})^{-1}_{jq}\Box_{t} (h_{t})_{pq}\\
&&+(h_{t})^{-1}_{k\ell}(h_{t})^{-1}_{ip}(h_{t})^{-1}_{jr}h^{-1}_{qs}
\nabla_{k}(h_{t})_{rs}\nabla_{\ell}(h_{t})_{pq}\\
&&+(h_{t})^{-1}_{k\ell}(h_{t})^{-1}_{jq}(h_{t})^{-1}_{ir}(h_{t})^{-1}_{ps}
\nabla_{k}(h_{t})_{rs}\nabla_{\ell}(h_{t})_{pq}.
\end{eqnarray*}
So the second term in the expression of $\Box_{t}((f'_{t}K_{t})^{-1}
|\nabla f_{t}|^{2}_{h_{t}})$ is
\begin{eqnarray*}
\Box_{t}((h_{t})^{-1}_{ij})\nabla_{i}f_{t}\nabla_{j}f_{t}
&=&-(h_{t})^{-1}_{ip}(h_{t})^{-1}_{jq}\Box_{t} (h_{t})_{pq}\nabla_{i}f_{t}\nabla_{j}f_{t}\\
&&+2(h_{t})^{-1}_{k\ell}(h_{t})^{-1}_{ip}(h_{t})^{-1}_{jr}(h_{t})^{-1}_{qs}
\nabla_{k}(h_{t})_{rs}\nabla_{\ell}(h_{t})_{pq}\nabla_{i}f_{t}\nabla_{j}f_{t}\\
&=&-\left\langle\Box_{t} (h_{t})_{ij},\nabla_{i}f_{t}\nabla_{j}f_{t}
\right\rangle_{h_{t}}\\
&&+2\left\langle\nabla_{i}f_{t}\nabla_{j}(h_{t})_{k\ell},\nabla_{j}f_{t}
\cdot\nabla_{i}(h_{t})_{k\ell}\right\rangle_{h_{t}}.
\end{eqnarray*}
Combining those identities, we have
\begin{eqnarray*}
\Box_{t}\left(\frac{|\nabla f_{t}|^{2}_{h_{t}}}{f'_{t}K_{t}}\right)
&=&-\left(1+\frac{f''_{t}K_{t}}{f'_{t}}\right)\frac{\Box_{t} f_{t}|\nabla f_{t}|^{2}_{h_{t}}}{(f'_{t}K_{t})^{2}}-\frac{\left\langle\Box_{t} (h_{t})_{ij},\nabla_{i}f_{t}\nabla_{j}f_{t}
\right\rangle_{h_{t}}}{f'_{t}K_{t}}
\\
&&+\left(\frac{3f''_{t}{}^{2}}{f'_{t}{}^{4}}-\frac{f'''_{t}}{f'_{t}{}^{3}}
+\frac{2}{f'_{t}{}^{2}K^{2}_{t}}
+\frac{3f''_{t}}{f'_{t}{}^{3}K_{t}}\right)\frac{|\nabla f_{t}|^{4}_{h_{t}}}{f'_{t}K_{t}}\\
&&\\
&&+2\frac{\left\langle\nabla_{i}f_{t}\nabla_{i}(h_{t})_{k\ell},
\nabla_{j}f_{t}\cdot\nabla_{i}(h_{t})_{k\ell}\right\rangle_{h_{t}}}{f'_{t}K_{t}}+2\frac{|\nabla\nabla f_{t}|^{2}_{h_{t}}}{f'_{t}K_{t}}\\
&&+\frac{\left\langle\nabla_{i}(\Box_{t} f_{t})+\langle\nabla_{i}(h_{t})_{jk},\nabla_{j}\nabla_{k}f_{t}\rangle_{h_{t}},\nabla_{i}f_{t}
\right\rangle_{h_{t}}}{f'_{t}K_{t}}\\
&&+\frac{\left\langle
(n-1)(h_{t})_{ij}(g_{t})^{jk}\nabla_{k}f_{t},\nabla_{i}f_{t}\right
\rangle_{h_{t}}}{f'_{t}K_{t}}\\
&&+2\left\langle\nabla_{k}(f'_{t}K_{t})^{-1},
\nabla_{k}(h_{t})^{-1}_{ij}\right\rangle_{h_{t}}\nabla_{i}f_{t}
\cdot\nabla_{j}f_{t}\\
&&+4\left\langle\nabla_{k}(f'_{t}K_{t})^{-1}\nabla_{i}f_{t},
\nabla_{k}\nabla_{i}f_{t}\right\rangle_{h_{t}}\\
&&-4\frac{\left\langle\nabla_{k}(h_{t})_{ij},\nabla_{k}\nabla_{i}f_{t}
\cdot\nabla_{j}f_{t}\right\rangle_{h_{t}}}{f'_{t}K_{t}},
\end{eqnarray*}
and we also have
\begin{eqnarray*}
\Box_{t}\left(\frac{|\nabla f_{t}|^{2}_{h_{t}}}{f'_{t}K_{t}}\right)
&=&-\left(1+\frac{f''_{t}K_{t}}{f'_{t}}\right)\frac{1}{(f'_{t}K_{t})^{2}}\Box_{t}
f_{t}\cdot|\nabla f_{t}|^{2}_{h_{t}}\\
&&+\left(\frac{3f''_{t}{}^{2}}{f'_{t}{}^{4}}-\frac{f'''_{t}}{f'_{t}{}^{3}}
+\frac{2}{(f'_{t}K_{t})^{2}}+\frac{3f''_{t}}{f'_{t}{}^{3}K_{t}}
\right)\frac{|\nabla f_{t}|^{4}_{h_{t}}}{f'_{t}K_{t}}\\
&&-\frac{\left\langle\Box_{t} (h_{t})_{ij},
\nabla_{i}f_{t}\nabla_{j}f_{t}\right\rangle_{h_{t}}}{f'_{t}K_{t}}\\
&&+2\frac{\left\langle\nabla_{i}f_{t}\nabla_{j}(h_{t})_{kl},
\nabla_{j}f_{t}\nabla_{i}(h_{t})_{kl}\right\rangle_{h_{t}}}{f'_{t}K_{t}}\\
&&+\frac{2(n-1)|\nabla f_{t}|^{2}_{g_{t}}}{f'_{t}K_{t}}+2\frac{\left\langle\nabla(\Box_{t} f_{t}),\nabla f_{t}
\right\rangle_{h_{t}}}{f'_{t}K_{t}}\\
&&+2\left[\frac{f''_{t}}{f'_{t}{}^{3}K_{t}}+\frac{1}{(f'_{t}K_{t})^{2}}
\right]\left\langle\nabla_{i}(h_{t})_{jk},\nabla_{i}f_{t}\nabla_{j}f_{t}
\nabla_{k}f_{t}\right\rangle_{h_{t}}\\
&&-4\left[\frac{f''_{t}}{f'_{t}{}^{3}K_{t}}+\frac{1}{(f'_{t}K_{t})^{2}}
\right]\left\langle\nabla_{i}\nabla_{j}f_{t},\nabla_{i}f_{t}\nabla_{j}f_{t}
\right\rangle_{h_{t}}\\
&&-2\frac{\left\langle\nabla_{i}(h_{t})_{jk},\nabla_{i}f_{t}
\nabla_{j}\nabla_{k}f_{t}
\right\rangle_{h_{t}}}{f'_{t}K_{t}}
+2\frac{|\nabla\nabla f_{t}|^{2}_{h_{t}}}{f'_{t}K_{t}},
\end{eqnarray*}
where we use the identities
\begin{eqnarray*}
\nabla\left((f'_{t}K_{t})^{-1}\right)&=&-\left(\frac{f''_{t}}{f'_{t}{}^{3}K_{t}}
+\frac{1}{f'_{t}{}^{2}K^{2}_{t}}\right)\nabla f_{t},\\
\nabla_{k}(h_{t})^{-1}_{ij}&=&-(h_{t})^{-1}_{ip}(h_{t})^{-1}_{jq}\nabla_{k}(h_{t})_{pq}.
\end{eqnarray*}
From the above equations we obtain
\begin{eqnarray*}
I&=&\left(1
+\frac{f''_{t}K_{t}}{f'_{t}}\right)
\frac{(2\Box_{t} f_{t}+H_{t}f_{t})|\nabla f_{t}|^{2}_{h_{t}}}{f'_{t}K_{t}}\\
&+&\left[4\left(\frac{f''_{t}K_{t}}{f'_{t}}+1\right)-1\right]\frac{1}{f'_{t}K_{t}}
\left\langle\nabla_{i}\nabla_{j}f_{t},\nabla_{i}f_{t}\cdot\nabla_{j}f_{t}
\right\rangle_{h_{t}}\\
&+&\left(\frac{f_{t}}{f'_{t}K_{t}}-2(n-1)\right)|\nabla f_{t}|^{2}_{g_{t}}+2\langle\nabla(H_{t}f_{t}),\nabla f_{t}\rangle_{h_{t}}\\
&-&\left[\frac{3f''_{t}{}^{2}}{f'_{t}{}^{4}}-\frac{f'''_{t}}{f'_{t}{}^{3}}+\frac{2}{(f'_{t}K_{t})^{2}}
+\frac{3f''_{t}}{f'_{t}{}^{3}K_{t}}\right]|\nabla f_{t}|^{4}_{h_{t}}\\
&+&\left\langle\Box_{t} (h_{t})_{ij},\nabla_{i}f_{t}\cdot\nabla_{j}f_{t}\right\rangle_{h_{t}}
-2\left\langle\nabla_{i}f_{t}\cdot\nabla_{j}(h_{t})_{k\ell},\nabla_{j}f_{t}
\cdot\nabla_{i}(h_{t})_{k\ell}\right\rangle_{h}\\
&-&2\left(\frac{f''_{t}}{f'_{t}{}^{2}}+\frac{1}{f'_{t}K_{t}}
\right)\left\langle\nabla_{i}(h_{t})_{jk},\nabla_{i}f_{t}\cdot\nabla_{j}f_{t}
\cdot\nabla_{k}f_{t}\right\rangle_{h_{t}}\\
&+&2\left\langle\nabla_{i}(h_{t})_{jk},\nabla_{i}f_{t}\cdot\nabla_{j}
\nabla_{k}f_{t}
\right\rangle_{h_{t}}-2|\nabla\nabla f_{t}|^{2}_{h_{t}},
\end{eqnarray*}
where
\begin{equation*}
I:=\partial_{t}\left(\frac{|\nabla f_{t}|^{2}_{h_{t}}}{f'_{t}K_{t}}\right)
-(f'_{t}K_{t})\Box_{t}\left(\frac{|\nabla f_{t}|^{2}_{h_{t}}}{f'_{t}K_{t}}
\right).
\end{equation*}
On the other hand, from \cite{C1}, we get
\begin{eqnarray*}
\Box_{t}(h_{t})_{ij}&=&\frac{1}{f'_{t}K_{t}}\nabla_{i}\nabla_{j}f_{t}
-\left(1+\frac{f''_{t}K_{t}}{f'_{t}}\right)\frac{1}{(f'_{t}K_{t})^{2}}
\nabla_{i}f_{t}\nabla_{j}f_{t}\\
&+&\left\langle\nabla_{i}(h)_{k\ell},
\nabla_{j}(h)_{k\ell}\right\rangle_{h_{t}}-H_{t}(h_{t})_{ij}+n(g_{t})^{k\ell}
(h_{t})_{ik}(h_{t})_{\ell j}.
\end{eqnarray*}
Here we use the identity $\nabla_{i}\nabla_{j}f_{t}=f''_{t}\nabla_{i}K_{t}
\nabla_{j}K_{t}+f'_{t}\nabla_{i}\nabla_{j}K_{t}$. Plugging it into previous formula, we get
\begin{eqnarray*}
I&=&\left(1+\frac{f''_{t}K_{t}}{f'_{t}}\right)
\frac{2\Box_{t} f_{t}|\nabla f_{t}|^{2}_{h_{t}}}{f'_{t}K_{t}}
+4\left(1+\frac{f''_{t}K_{t}}{f'_{t}}\right)\frac{1}{f'_{t}K_{t}}
\left\langle\nabla_{i}\nabla_{j}f_{t},\nabla_{i}f_{t}\nabla_{j}f_{t}
\right\rangle_{h_{t}}\\
&+&\left[\frac{f_{t}}{f'_{t}K_{t}}-2(n-1)+n\right]|\nabla f_{t}|^{2}_{g_{t}}
+\left[1+\left(1+\frac{f''_{t}K_{t}}{f'_{t}}\right)\frac{f_{t}}{f'_{t}K_{t}}\right]H_{t}|\nabla f_{t}|^{2}_{h_{t}}\\
&+&2f_{t}\langle\nabla H_{t},\nabla f_{t}\rangle_{h_{t}}-\left(\frac{3f''_{t}{}^{2}}{f'_{t}{}^{4}}
-\frac{f'''_{t}}{f'_{t}{}^{3}}
+\frac{3}{(f'_{t}K_{t})^{2}}+\frac{4f''_{t}}{f'_{t}{}^{3}K_{t}}
\right)|\nabla f_{t}|^{4}_{h_{t}}\\
&-&\left\langle\nabla_{i}f_{t}\cdot\nabla_{j}(h_{t})_{k\ell},\nabla_{j}(f_{t})
\cdot\nabla_{i}
(h_{t})_{k\ell}\right\rangle_{h_{t}}\\
&-&2\left(\frac{f''_{t}K_{t}}{f'_{t}}+1\right)\frac{1}{f'_{t}K_{t}}
\left\langle\nabla_{i}(h_{t})_{jk},
\nabla_{i}f_{t}\nabla_{j}f_{t}\nabla_{k}f_{t}\right\rangle_{h_{t}}\\
&+&2\left\langle\nabla_{i}(h_{t})_{jk},\nabla_{i}f_{t}\nabla_{j}\nabla_{k}f_{t}
\right\rangle_{h_{t}}-2|\nabla\nabla f_{t}|^{2}_{h_{t}}.
\end{eqnarray*}
The final step is to compute $\langle\nabla_{i}\nabla_{j}f_{t},\nabla_{i}f_{t}\nabla_{j}f_{t}
\rangle_{h_{t}}$. We consider
\begin{eqnarray*}
\left\langle\nabla f_{t},\nabla\left(\frac{|\nabla f_{t}|^{2}
_{h_{t}}}{f'_{t}K_{t}}\right)
\right\rangle_{h_{t}}&=&-\left(\frac{f''_{t}}{f'_{t}{}^{3}K_{t}}+\frac{1}{K^{2}_{t}f'_{t}{}^{2}}
\right)|\nabla f_{t}|^{4}_{h_{t}}\\
&&-\frac{1}{f'_{t}K_{t}}\left\langle\nabla_{i}(h_{t})_{jk},
\nabla_{i}f_{t}\nabla_{j}
f_{t}\nabla_{k}f_{t}\right\rangle_{h_{t}}\\
&&+\frac{2}{f'_{t}K_{t}}\left\langle\nabla_{i}\nabla_{j}f_{t},
\nabla_{i}f_{t}\nabla_{j}f_{t}\right\rangle_{h_{t}}.
\end{eqnarray*}
Substituting this formula into the evolution equation, we obtain
\begin{eqnarray*}
I&=&2\left(1+\frac{f''_{t}K_{t}}{f'_{t}}\right)\left\langle\nabla f_{t},\nabla\left((f'_{t}K_{t})^{-1}|\nabla f_{t}|^{2}_{h_{t}}\right)\right\rangle_{h_{t}}-2|\nabla\nabla f_{t}|^{2}_{h_{t}}\\
&&+2\left\langle\nabla_{i}(h_{t})_{jk},\nabla_{i}f_{t}
\nabla_{j}\nabla_{k}f_{t}\right\rangle_{h_{t}}
+\left(1+\frac{f''_{t}K_{t}}{f'_{t}}\right)\frac{2}{f'_{t}K_{t}}\Box_{t} f_{t}
\cdot|\nabla f_{t}|^{2}_{h_{t}}\\
&&-\left\langle\nabla_{i}f_{t}\nabla_{j}(h_{t})_{kl},\nabla_{j}f_{t}
\nabla_{i}(h_{t})_{kl}\right\rangle_{h_{t}}
+\left(\frac{f_{t}}{f'_{t}K_{t}}+2-n\right)|\nabla f_{t}|^{2}_{g_{t}}\\
&+&\left[1+\left(1+\frac{f''_{t}K_{t}}{f'_{t}}\right)\frac{f_{t}}{f'_{t}K_{t}}
\right]H_{t}|\nabla f_{t}|^{2}_{h_{t}}+2f_{t}\langle\nabla H_{t},
\nabla f_{t}\rangle_{h_{t}}\\
&&-\left[\frac{3f''_{t}{}^{2}}{f'_{t}{}^{4}}-\frac{f'''_{t}}{f'_{t}{}^{3}}
+\frac{3}{(f'_{t}K_{t})^{2}}+\frac{4f''_{t}}{f'_{t}{}^{3}K}
\right.\\
&&-\left.2\left(1+\frac{f''_{t}K_{t}}{f'_{t}}\right)\left(\frac{f''_{t}}
{f'_{t}{}^{3}K_{t}}+\frac{1}{(f'_{t}K_{t})^{2}}\right)\right]|\nabla f_{t}|^{4}_{h_{t}}.
\end{eqnarray*}
The bracket in the last term equals $\frac{f''_{t}{}^{2}}{f'_{t}{}^{4}}
-\frac{f'''_{t}}{f'_{t}{}^{3}}
+\frac{1}{(f'_{t}K_{t})^{2}}$.
\end{proof}

\begin{lemma} \label{l3.3}Under the $f$-Gaussian curvature flow, we have
\begin{equation}
\partial_{t}(f_{t}H_{t})=f'_{t}K_{t}\cdot(\Box_{t} f_{t}+H_{t}f_{t})H_{t}
+f_{t}(\Delta_{t} f_{t}+f_{t}|h_{t}|^{2}_{g_{t}}).\label{3.8}
\end{equation}
\end{lemma}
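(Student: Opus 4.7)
The plan is to apply the Leibniz rule to $\partial_{t}(f_{t}H_{t})$ and substitute the already-established evolution equations for $f_{t}$ and $H_{t}$, so the proof is essentially a one-line verification.

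More precisely, I would write
\begin{equation*}
\partial_{t}(f_{t}H_{t})=(\partial_{t}f_{t})\,H_{t}+f_{t}\,(\partial_{t}H_{t}),
\end{equation*}
and then plug in equation (\ref{2.19}), namely $\partial_{t}f_{t}=f'_{t}K_{t}\cdot[\Box_{t}f_{t}+H_{t}f_{t}]$, into the first summand and equation (\ref{2.20}), namely $\partial_{t}H_{t}=\Delta_{t}f_{t}+f_{t}|h_{t}|^{2}_{g_{t}}$, into the second. The resulting expression is exactly the right-hand side of (\ref{3.8}).

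There is no real obstacle here: both ingredients have already been listed in Subsection 2.2 as part of the standard evolution equations under the $f$-Gaussian curvature flow, and no commutator identities, Simons-type formulas, or Bianchi manipulations need to be invoked. The only thing to check is that we are indeed free to differentiate the product pointwise in $t$, which is immediate because $f_{t}=f(K_{t})$ and $H_{t}$ are smooth functions of $(x,t)$ along the flow (Remark \ref{r2.2} guarantees $K_{t}>0$, so $f(K_{t})$ is smooth). Hence the lemma follows directly.
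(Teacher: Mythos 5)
Your proposal is correct and is precisely the paper's own argument: the paper's proof reads ``This immediately follows from (\ref{2.19}) and (\ref{2.20})'', which is exactly the product-rule computation you spell out. Nothing further is needed.
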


\begin{proof} This immediately follows from (\ref{2.19}) and (\ref{2.20}).
\end{proof}

\begin{lemma} \label{l3.4} Under the $f$-Gaussian curvature flow, we have
\begin{eqnarray}
|P_{t}|^{2}_{h_{t}}&=&|\nabla\nabla f_{t}|^{2}_{h_{t}}+\left\langle\nabla_{i}f_{t}\cdot\nabla_{j}(h_{t})_{kl},
\nabla_{j}f_{t}\cdot\nabla_{i}(h_{t})_{k\ell}\right\rangle_{h_{t}}\nonumber
+f^{2}_{t}|h_{t}|^{2}\\
&&-2\left\langle\nabla_{i}(h_{t})_{jk},\nabla_{i}f_{t}
\cdot\nabla_{j}
\nabla_{k}f_{t}\right\rangle_{h_{t}}+2f_{t}\Delta_{t} f_{t}-2f_{t}
\langle\nabla H_{t},\nabla f_{t}\rangle_{h_{t}},\label{3.9}\\
\mathbf{P}^{2}_{t}&=&\left(\Box_{t} f_{t}\right)^{2}+\frac{|\nabla f_{t}|^{4}_{h_{t}}}{(f'_{t}K_{t})^{2}}
+f^{2}_{t}H^{2}_{t}-2\frac{\Box_{t} f_{t}\cdot|\nabla f_{t}|^{2}_{h_{t}}}{f'_{t}K_{t}}\nonumber\\
&&+2H_{t}f_{t}\cdot\Box_{t} f_{t}-2\frac{H_{t}f_{t}|\nabla f_{t}|^{2}_{h_{t}}}{f'_{t}K_{t}}.\label{3.10}
\end{eqnarray}
\end{lemma}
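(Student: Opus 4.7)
The two identities are algebraic consequences of the definitions, so the plan is computational rather than conceptual, and the main effort is bookkeeping of tensor contractions.

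For the first identity \eqref{3.9}, the plan is to decompose
\[
(P_t)_{ij} = A_{ij} - B_{ij} + C_{ij}, \qquad A_{ij}=\nabla_i\nabla_j f_t,\ \ B_{ij}=(h_t)^{-1}_{k\ell}\nabla_k(h_t)_{ij}\cdot\nabla_\ell f_t,\ \ C_{ij}=f_t(g_t)^{k\ell}(h_t)_{ik}(h_t)_{\ell j},
\]
and then expand $|P_t|^2_{h_t}=(h_t)^{-1}_{ip}(h_t)^{-1}_{jq}(P_t)_{ij}(P_t)_{pq}$ into the six pieces $|A|^2_h,|B|^2_h,|C|^2_h,-2\langle A,B\rangle_h,+2\langle A,C\rangle_h,-2\langle B,C\rangle_h$ and identify each with a term on the right-hand side.

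The three diagonal pieces are clear: $|A|^2_{h_t}=|\nabla\nabla f_t|^2_{h_t}$ by definition, while $|C|^2_{h_t}=f_t^{2}|h_t|^2_{g_t}$ follows from the identity $(h_t)^{-1}_{ip}(h_t)_{ik}=\delta_{pk}$, which reduces the quadruple contraction of four copies of $h_t$ against two copies of $h_t^{-1}$ and two copies of $g_t^{-1}$ to $\operatorname{tr}(h_t^{2})$. The piece $|B|^2_{h_t}$ requires the Codazzi identity \eqref{2.7}, which tells us that $\nabla h$ is totally symmetric in its three indices, so after relabeling the outer/inner index of $\nabla h$ the expression $|B|^2_{h_t}$ exactly matches $\langle \nabla_i f_t\cdot\nabla_j(h_t)_{k\ell},\nabla_j f_t\cdot\nabla_i(h_t)_{k\ell}\rangle_{h_t}$ with the convention for inner products of covariant $4$-tensors inherited from the definition in Section 2.

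The three cross terms work out cleanly as well. For $-2\langle A,B\rangle_{h_t}$, the $h_t^{-1}$ contractions and symmetry of $h_t^{-1}$ give $-2\langle\nabla_i(h_t)_{jk},\nabla_i f_t\cdot\nabla_j\nabla_k f_t\rangle_{h_t}$ (again using Codazzi so that which index of $\nabla h$ we call ``outer'' is immaterial). For $+2\langle A,C\rangle_{h_t}$, the contractions $(h_t)^{-1}_{ip}(h_t)_{pk}=\delta_{ik}$ collapse the four $h_t$'s against their inverses and leave $2f_t g_t^{ij}\nabla_i\nabla_j f_t=2f_t\Delta_t f_t$. For $-2\langle B,C\rangle_{h_t}$ the same collapses give $-2f_t(h_t)^{-1}_{k\ell}\nabla_\ell f_t\cdot g_t^{mn}\nabla_k(h_t)_{mn}$, and since $\nabla g=0$ we have $g_t^{mn}\nabla_k(h_t)_{mn}=\nabla_k H_t$, producing $-2f_t\langle\nabla H_t,\nabla f_t\rangle_{h_t}$.

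The second identity \eqref{3.10} is immediate: one simply squares the alternative expression for $\mathbf{P}_t$ established in \eqref{3.3},
\[
\mathbf{P}_t=\Box_t f_t+f_t H_t-\frac{|\nabla f_t|^2_{h_t}}{f'_t K_t},
\]
and expands $(A+B-C)^2$ with $A=\Box_t f_t$, $B=f_t H_t$, $C=|\nabla f_t|^2_{h_t}/(f'_t K_t)$. The only minor obstacle in the whole lemma is tracking the indices in the $|B|^2_{h_t}$ computation so that one recognizes the asymmetric-looking expression $\langle\nabla_i f_t\cdot\nabla_j(h_t)_{k\ell},\nabla_j f_t\cdot\nabla_i(h_t)_{k\ell}\rangle_{h_t}$ as the square of $B_{ij}$; everything else is routine contraction of tensors.
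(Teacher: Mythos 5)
Your proposal is correct and takes essentially the same route as the paper: the paper's proof likewise expands $|P_{t}|^{2}_{h_{t}}=(h_{t})^{-1}_{ik}(h_{t})^{-1}_{j\ell}(P_{t})_{ij}(P_{t})_{k\ell}$ by multiplying out the three-term expression \eqref{3.1} (writing all nine contractions explicitly rather than grouping them into your six pieces first) and obtains \eqref{3.10} by squaring the trace formula \eqref{3.3}. The only cosmetic difference is that the Codazzi symmetry you invoke for the $|B|^{2}_{h_{t}}$ term is not actually needed, since the identification with $\left\langle\nabla_{i}f_{t}\cdot\nabla_{j}(h_{t})_{k\ell},\nabla_{j}f_{t}\cdot\nabla_{i}(h_{t})_{k\ell}\right\rangle_{h_{t}}$ already follows from relabeling dummy indices.
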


\begin{proof} By definition, we have
\begin{eqnarray*}
|P|^{2}_{h}&=&(h_{t})^{-1}_{ik}(h_{t})^{-1}_{j\ell}\left[\nabla_{i}\nabla_{j}f_{t}\cdot
\nabla_{k}\nabla_{\ell}f_{t}-(h_{t})^{-1}_{pq}\nabla_{p}(h_{t})_{ij}
\nabla_{q}f_{t}\nabla_{k}\nabla_{\ell}f_{t}\right.\\
&&-f_{t}(g_{t})^{pq}(h)_{ip}(h_{t})_{jq}\nabla_{k}\nabla_{\ell}f_{t}
-(h_{t})^{-1}_{pq}\nabla_{p}(h_{t})_{k\ell}\nabla_{q}f_{t}\nabla_{i}
\nabla_{j}f_{t}\\
&&+(h_{t})^{-1}_{pq}(h_{t})^{-1}_{rs}\nabla_{p}(h_{t})_{ij}
\nabla_{r}(h_{t})_{k\ell}\nabla_{q}f_{t}\nabla_{s}f_{t}\\
&&-f_{t}(g_{t})^{rs}(h_{t})_{ir}(h_{t})_{js}(h_{t})^{-1}_{pq}
\nabla_{p}(h_{t})_{k\ell}\nabla_{q}f_{t}+\left.f_{t}(g_{t})^{pq}(h_{t})_{kp}(h_{t})_{\ell q}\nabla_{i}\nabla_{j}f_{t}\right.\\
&&-\left.f_{t}(g_{t})^{rs}(h_{t})_{kr}(h_{t})_{\ell s}(h_{t})^{-1}_{pq}\nabla_{p}(h_{t})_{ij}\nabla_{q}f_{t}\right.\\
&&+\left.f^{2}(g_{t})^{pq}(h_{t})_{ip}(h_{t})_{jq}(g_{t})^{rs}(h_{t})_{kr}(h_{t})_{\ell s}
\right]\\
&=&|\nabla\nabla f_{t}|^{2}_{h_{t}}+\left\langle\nabla_{i}f_{t}\nabla_{j}
(h_{t})_{k\ell},
\nabla_{j}f_{t}\nabla_{i}(h_{t})_{k\ell}\right\rangle_{h_{t}}
+f^{2}_{t}|h_{t}|^{2}_{g_{t}}\\
&&-2\left\langle\nabla_{i}(h_{t})_{jk},\nabla_{i}f_{t}\nabla_{j}\nabla_{k}f_{t}
\right\rangle_{h_{t}}+2f_{t}\Delta_{t} f_{t}-2f_{t}\langle\nabla H_{t},
\nabla f_{t}\rangle_{h_{t}}.
\end{eqnarray*}
The second equation is obviously proved by using the first one.
\end{proof}

As a direct consequence, we obtain

\begin{theorem} \label{t3.5}Under the $f$-Gaussian curvature flow, we have
\begin{eqnarray}
\partial_{t}\mathbf{P}_{t}&=&f'_{t}K_{t}\cdot\Box_{t}\mathbf{P}_{t}
+2\left(1+\frac{f''_{t}K_{t}}{f'_{t}}\right)\langle\nabla f_{t},
\nabla \mathbf{P}_{t}\rangle_{h_{t}}
+|P_{t}|^{2}_{h_{t}}+\left(1+\frac{f''_{t}K_{t}}{f'_{t}}\right)\mathbf{P}^{2}_{t}\nonumber\\
&&+\left[\left(\frac{f''_{t}K_{t}}{f'_{t}}\right)^{2}-\frac{f''_{t}K_{t}}{f'_{t}}
-\frac{f'''_{t}K^{2}_{t}}{f'_{t}}\right]
\frac{|\nabla f_{t}|^{2}_{h_{t}}}{f'_{t}K_{t}}\left(\frac{|\nabla f_{t}|^{2}_{h_{t}}}{f'_{t}K_{t}}-\Box_{t} f_{t}
-H_{t}f_{t}\right)\nonumber\\
&&+\left(f'_{t}K_{t}-\frac{f_{t}f''_{t}K_{t}}{f'_{t}}-f_{t}\right)(H^{2}_{t}f_{t}
+H_{t}\cdot\Box_{t} f_{t})\nonumber\\
&&+\left[\left(1+\frac{f''_{t}K_{t}}{f'_{t}}\right)\frac{f_{t}}{f'_{t}K_{t}}
-1\right]H_{t}|\nabla f_{t}|^{2}_{h_{t}}.
\label{3.11}
\end{eqnarray}
\end{theorem}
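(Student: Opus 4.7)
The plan is to differentiate the scalar $\mathbf{P}_{t}=\Box_{t}f_{t}+f_{t}H_{t}-|\nabla f_{t}|^{2}_{h_{t}}/(f'_{t}K_{t})$ of (\ref{3.3}) term-by-term, invoking Lemmas \ref{l3.1}, \ref{l3.2}, \ref{l3.3} for the three pieces, and then reorganise the right-hand side so that the second-order part recombines into $f'_{t}K_{t}\Box_{t}\mathbf{P}_{t}$, the first-order part into the drift $2(1+f''_{t}K_{t}/f'_{t})\langle\nabla f_{t},\nabla\mathbf{P}_{t}\rangle_{h_{t}}$, and the zeroth-order part into $|P_{t}|^{2}_{h_{t}}+(1+f''_{t}K_{t}/f'_{t})\mathbf{P}_{t}^{2}$ plus the four displayed correction terms of (\ref{3.11}).

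\textbf{Step 1 (differentiate and substitute).} Write $\partial_{t}\mathbf{P}_{t}=\partial_{t}(\Box_{t}f_{t})+\partial_{t}(f_{t}H_{t})-\partial_{t}(|\nabla f_{t}|^{2}_{h_{t}}/(f'_{t}K_{t}))$ and substitute the right-hand sides of (\ref{3.7}), (\ref{3.8}), and Lemma \ref{l3.2}.

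\textbf{Step 2 (identify heat operator on $\mathbf{P}_{t}$).} The terms $f'_{t}K_{t}\Box_{t}(\Box_{t}f_{t})$, $f'_{t}K_{t}\Box_{t}(H_{t}f_{t})$ from Lemma \ref{l3.1} and $-(f'_{t}K_{t})\Box_{t}(|\nabla f_{t}|^{2}_{h_{t}}/(f'_{t}K_{t}))$ from Lemma \ref{l3.2} combine, by linearity of $\Box_{t}$, into $f'_{t}K_{t}\Box_{t}\mathbf{P}_{t}$. Likewise the three inner products $\langle\nabla f_{t},\nabla(\Box_{t}f_{t})\rangle_{h_{t}}$, $\langle\nabla f_{t},\nabla(H_{t}f_{t})\rangle_{h_{t}}$, and $\langle\nabla f_{t},\nabla(|\nabla f_{t}|^{2}_{h_{t}}/(f'_{t}K_{t}))\rangle_{h_{t}}$ carry the common factor $2(1+f''_{t}K_{t}/f'_{t})$ and collapse into $2(1+f''_{t}K_{t}/f'_{t})\langle\nabla f_{t},\nabla\mathbf{P}_{t}\rangle_{h_{t}}$.

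\textbf{Step 3 (zeroth-order reorganisation via Lemma \ref{l3.4}).} This is the heart of the computation. The Hessian-squared contributions $-|\nabla\nabla f_{t}|^{2}_{h_{t}}$ from Lemma \ref{l3.1} and $+2|\nabla\nabla f_{t}|^{2}_{h_{t}}$ from $-\partial_{t}(|\nabla f_{t}|^{2}_{h_{t}}/(f'_{t}K_{t}))$ leave $+|\nabla\nabla f_{t}|^{2}_{h_{t}}$, which together with the $f_{t}^{2}|h_{t}|^{2}_{g_{t}}$ piece from Lemma \ref{l3.3}, the cubic $\langle\nabla(h_{t}),\nabla f_{t}\nabla\nabla f_{t}\rangle$ terms, and the $-2f_{t}\langle\nabla H_{t},\nabla f_{t}\rangle_{h_{t}}+2f_{t}\Delta_{t}f_{t}$ contribution from Lemma \ref{l3.2} reassemble (by (\ref{3.9})) into $|P_{t}|^{2}_{h_{t}}$. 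Similarly the pieces $(1+f''_{t}K_{t}/f'_{t})(\Box_{t}f_{t})^{2}$, $2(1+f''_{t}K_{t}/f'_{t})\Box_{t}f_{t}\cdot|\nabla f_{t}|^{2}_{h_{t}}/(f'_{t}K_{t})$, the quartic $|\nabla f_{t}|^{4}_{h_{t}}/(f'_{t}K_{t})^{2}$ (after using the end-of-proof simplification from Lemma \ref{l3.2}), and the $H_{t}f_{t}\Box_{t}f_{t}$ cross-terms match, via (\ref{3.10}), the expansion of $(1+f''_{t}K_{t}/f'_{t})\mathbf{P}_{t}^{2}$.

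\textbf{Step 4 (collect the remainder).} What survives after identifying $|P_{t}|^{2}_{h_{t}}$ and $(1+f''_{t}K_{t}/f'_{t})\mathbf{P}_{t}^{2}$ involves only the three scalars $\Box_{t}f_{t}$, $H_{t}f_{t}$, $|\nabla f_{t}|^{2}_{h_{t}}/(f'_{t}K_{t})$ (with coefficients built from $f_{t},f'_{t},f''_{t},f'''_{t},K_{t}$) and the gradient $|\nabla f_{t}|^{2}_{g_{t}}$. Factoring the coefficient of $(|\nabla f_{t}|^{2}_{h_{t}}/(f'_{t}K_{t}))(|\nabla f_{t}|^{2}_{h_{t}}/(f'_{t}K_{t})-\Box_{t}f_{t}-H_{t}f_{t})$ as $(f''_{t}K_{t}/f'_{t})^{2}-f''_{t}K_{t}/f'_{t}-f'''_{t}K_{t}^{2}/f'_{t}$ and the coefficient of $H_{t}^{2}f_{t}+H_{t}\Box_{t}f_{t}$ as $f'_{t}K_{t}-f_{t}f''_{t}K_{t}/f'_{t}-f_{t}$ yields exactly the last three lines of (\ref{3.11}); the $|\nabla f_{t}|^{2}_{g_{t}}$ pieces cancel against the $(f_{t}/(f'_{t}K_{t})+2-n)|\nabla f_{t}|^{2}_{g_{t}}$ contributions from Lemmas \ref{l3.1} and \ref{l3.2}.

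The main obstacle is Step 3: the cancellation of the Hessian-squared and quartic gradient terms depends on subtracting Lemma \ref{l3.2} (rather than adding) so that the raw coefficients of $|\nabla f_{t}|^{4}_{h_{t}}$ collapse to $f''_{t}{}^{2}/f'_{t}{}^{4}-f'''_{t}/f'_{t}{}^{3}+1/(f'_{t}K_{t})^{2}$, and on verifying that the mixed $\langle\nabla(h_{t}),\nabla f_{t}\nabla\nabla f_{t}\rangle$ terms have exactly the sign and multiplicity required by (\ref{3.9}); this is pure bookkeeping but is where every other piece of the proof is ultimately tested.
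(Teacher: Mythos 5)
Your proposal is correct and is essentially the paper's own argument: the paper states Theorem \ref{t3.5} as ``a direct consequence'' of Lemmas \ref{l3.1}--\ref{l3.4}, which is exactly the term-by-term differentiation of $\mathbf{P}_{t}=\Box_{t}f_{t}+f_{t}H_{t}-|\nabla f_{t}|^{2}_{h_{t}}/(f'_{t}K_{t})$ and reassembly via (\ref{3.9})--(\ref{3.10}) that you describe, and the bookkeeping in your Steps 2--4 does close up (the only slip is attributing the $2f_{t}\Delta_{t}f_{t}$ term to Lemma \ref{l3.2}; it arises once each from Lemmas \ref{l3.1} and \ref{l3.3}).
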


\subsection{Harnack inequality for the negative power Gaussian curvature flow}
For the sake of studying, we define three functions for $x>0$:
\begin{eqnarray}
\alpha(x)&=&\left(\frac{xf''(x)}{f'(x)}\right)^{2}-\frac{xf''(x)}{f'(x)}
-\frac{x^{2}f'''(x)}{f'(x)},\label{3.12}\\
\beta(x)&=&xf'(x)-\frac{xf(x)f''(x)}{f'(x)}-f(x),\label{3.13}\\
\gamma(x)&=&\left(1+\frac{xf''(x)}{f'(x)}\right)\frac{f(x)}{xf'(x)}-1.\label{3.13}
\end{eqnarray}
Using this simple notation, we can rewrite the evolution equation for $\mathbf{P}_{t}$ as
\begin{eqnarray*}
\partial_{t}\mathbf{P}_{t}&=&f'_{t}K_{t}\cdot\Box_{t} \mathbf{P}_{t}+2\left(1+\frac{f''_{t}K_{t}}{f'_{t}}\right)\langle\nabla f_{f},
\nabla\mathbf{P}_{t}\rangle_{h_{t}}
+|P|^{2}_{h_{t}}+\left(1+\frac{f''_{t}K_{t}}{f'_{t}}\right)\mathbf{P}^{2}_{t}\\
&&+\alpha(K_{t})\frac{|\nabla f_{t}|^{2}_{h_{t}}}{f'_{t}K_{t}}
\left(\frac{|\nabla f_{t}|^{2}_{h_{t}}}{f'_{t}K_{t}}-\Box_{t} f_{t}-H_{t}f_{t}
\right)\\
&&+\beta(K_{t})\left[H^{2}_{t}f_{t}+H_{t}\cdot\Box_{t} f_{t}\right]
+\gamma(K_{t})H_{t}|\nabla f_{t}|^{2}_{h_{t}}\\
&=&f'_{t}K_{t}\cdot\Box_{t}\mathbf{P}_{t}+2\left(1+\frac{f''_{t}K_{t}}{f'_{t}}
\right)\langle\nabla f_{t},\nabla \mathbf{P}_{t}\rangle_{h_{t}}
+|P_{t}|^{2}_{h_{t}}+\left(1+\frac{f''_{t}K_{t}}{f'_{t}}\right)\mathbf{P}^{2}_{t}\\
&&+\left(H_{t}\beta(K_{t})-\alpha(K_{t})\frac{|\nabla f_{t}|^{2}_{h_{t}}}{f'_{t}K_{t}}\right)\mathbf{P}_{t}+
\left(\frac{\beta(K_{t})}{f'_{t}K_{t}}+\gamma(K_{t})\right)H_{t}|\nabla f_{t}|^{2}_{h_{t}}.
\end{eqnarray*}
Observing that $\gamma(x)=-\beta(x)/xf(x)'$ and $\beta'(x)=f(x)\alpha(x)/x$, we have
\begin{eqnarray}
\partial_{t}\mathbf{P}_{t}&=&f'_{t}K_{t}\cdot\Box P+2\left(1+\frac{f''_{t}K_{t}}{f'_{t}}\right)\langle\nabla f_{t},
\nabla \mathbf{P}_{t}\rangle_{h_{t}}
+|P_{t}|^{2}_{h_{t}}+\left(1+\frac{f''_{t}K_{t}}{f'_{t}}\right)\mathbf{P}^{2}_{t}\nonumber\\
&&+\left(H_{t}\beta_{t}-\frac{\beta'_{t}}{f_{t}f'_{t}}|\nabla f_{t}|^{2}_{h_{t}}\right)\mathbf{P}_{t},\label{3.15}
\end{eqnarray}
where $\beta_{t}=\beta(K_{t})$.

To obtain the Harnack inequality for the negative power Gaussian curvature flow,
we should impose some natural condition on $f$. First we investigate some properties of above three functions associated to the function $f$.

\begin{lemma} \label{l3.6}We have
\begin{itemize}

\item[(a)] $\alpha\equiv0$ if and only if $f'(x)=ax^{b}$ for some $a>0$ and $b\in\mathbb{R}$;

\item[(b)] $\beta\equiv0$ if and only if $f(x)=ax^{b}$ for $ab>0$.

\end{itemize}
\end{lemma}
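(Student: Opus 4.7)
The approach is to verify both ``if'' directions by direct substitution of the proposed power functions, and to handle the converses by reducing $\alpha\equiv 0$ and $\beta\equiv 0$ to first-order linear ODEs via logarithmic substitutions.

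For part (a), the first step is to introduce the auxiliary function $u=\log f'$, which is smooth on $(0,\infty)$ since $f'>0$ by hypothesis. Using $u'=f''/f'$ and $u''=f'''/f'-(u')^2$, the three ratios in the definition of $\alpha$ can be rewritten in terms of $u'$ and $u''$, and the cancellation collapses the expression to the compact identity $\alpha(x)=-x\bigl(xu'(x)\bigr)'$. From there $\alpha\equiv 0$ on $(0,\infty)$ is equivalent to $xu'\equiv c$ for some constant $c$, whence $u(x)=c\log x+c_1$ and $f'(x)=e^{c_1}x^{c}$, i.e.\ $f'(x)=ax^b$ with $a=e^{c_1}>0$ and $b=c\in\mathbb R$.

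For part (b), the ``if'' direction is a direct substitution of $f=ax^b$ into the definition of $\beta$, which yields the telescoping combination $abx^b-a(b-1)x^b-ax^b=0$. For the converse, I would first observe that if $\beta\equiv 0$ then $f$ cannot vanish on $(0,\infty)$: at any hypothetical root $x_0$ the definition would reduce to $\beta(x_0)=x_0f'(x_0)>0$. With $f$ nowhere zero (and of constant sign, since $(0,\infty)$ is connected), multiplying $\beta=0$ by $f'$ and dividing by $xff'$ yields
\begin{equation*}
(\log|f'|)'=(\log|f|)'-\frac{1}{x};
\end{equation*}
one integration gives $xf'/f\equiv b$ for a nonzero constant $b$, and a second produces $f(x)=ax^b$. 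The hypothesis $f'(x)=abx^{b-1}>0$ on $(0,\infty)$ then forces $ab>0$.

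I expect no real obstacle: both halves of the lemma reduce to elementary ODEs once the correct logarithmic substitution is in hand. The only mild subtlety is the non-vanishing of $f$ needed to divide by it in part (b), and this is handled by the boundary observation $\beta(x_0)=x_0f'(x_0)>0$ at any hypothetical root.
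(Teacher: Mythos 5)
Your proof is correct and follows essentially the same route as the paper: both reduce $\alpha\equiv0$ and $\beta\equiv0$ to first-order linear ODEs for the logarithmic derivatives $f''/f'$ and $f'/f$. Your identity $\alpha(x)=-x\bigl(xu'\bigr)'$ with $u=\log f'$ is just a repackaging of the paper's equation $xg'=-g$ for $g=f''/f'$, and your observation that $\beta\equiv0$ forces $f$ to be nowhere zero (so that dividing by $f$ is legitimate) fills in a small detail the paper leaves implicit.
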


\begin{proof} Suppose $\alpha\equiv0$. Then $x(f''^{2}-f'f''')=f''f'$ and hence
\begin{equation*}
x\left(\frac{f''^{2}-f'f''}{f'^{2}}\right)=f''f',
\end{equation*}
which implies that $-x(f''/f')'=f''/f'$. Let $g:=f''/f'$; so $xg'=-g$. Solving this ODE, we get $g=b/x$ for some constant $b$. For (b), putting $g=f'/f$ we get $f=ax^{b}$.
\end{proof}

When $f(x)=x^{b}$ for $b>0$, B. Chow \cite{C2} derived the Harnack inequality
for the $f$-Gaussian curvature flow. For the case $b<0$, we give the following:

\begin{theorem} \label{t3.7} If $f(x)=ax^{b}$ satisfies (1) $a>0$ and $b>0$, or (2) $a<0$ and $-\frac{1}{n}<b<0$, then
\begin{equation}
\mathbf{P}_{t}\geq-\frac{1}{\left(\frac{1}{n}+b\right)t}.\label{3.16}
\end{equation}
Consequently,
\begin{equation}
\frac{\partial f(K_{t})}{\partial t}-|\nabla f(K_{t})|^{2}_{h}+\frac{f'(K_{t})K_{t}}
{\left(\frac{1}{n}+b\right)t}\geq0.\label{3.17}
\end{equation}
\end{theorem}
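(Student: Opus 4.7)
The plan is to specialize the master evolution identity (\ref{3.15}) for $\mathbf{P}_t$ to the power-law case $f(x)=ax^b$ and then run a parabolic maximum-principle comparison against the scalar ODE $\dot y=(1/n+b)y^2$. By Lemma \ref{l3.6}, both $\alpha(x)\equiv 0$ and $\beta(x)\equiv 0$ when $f$ is a pure power; since the identity $\beta'(x)=f(x)\alpha(x)/x$ recorded just before (\ref{3.15}) then gives $\beta'\equiv 0$, and since a direct computation yields $1+xf''(x)/f'(x)=b$, substituting into (\ref{3.15}) annihilates every source term except the quadratic ones, leaving
\begin{equation*}
\partial_t\mathbf{P}_t = f'_tK_t\cdot\Box_t\mathbf{P}_t + 2b\,\langle\nabla f_t,\nabla\mathbf{P}_t\rangle_{h_t} + |P_t|^2_{h_t} + b\,\mathbf{P}_t^{\,2}.
\end{equation*}
Under either hypothesis (1) or (2) one has $ab>0$, so $f'_tK_t=abK_t^{b}>0$ (the ellipticity required by the flow), and $1/n+b>0$, so the signs cooperate.

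Next I would apply the pointwise Cauchy--Schwarz inequality $|P_t|^2_{h_t}\ge \mathbf{P}_t^{\,2}/n$, which holds because $\mathbf{P}_t$ is the $h_t$-trace of the symmetric $2$-tensor $P_t$ on the $n$-dimensional $M^n$ (diagonalize $P_t$ with respect to the positive definite $h_t$ and apply $\sum\lambda_i^2\ge (\sum\lambda_i)^2/n$). This upgrades the identity above to the scalar differential inequality
\begin{equation*}
\partial_t\mathbf{P}_t \ge f'_tK_t\cdot\Box_t\mathbf{P}_t + 2b\,\langle\nabla f_t,\nabla\mathbf{P}_t\rangle_{h_t} + \left(\tfrac{1}{n}+b\right)\mathbf{P}_t^{\,2},
\end{equation*}
which is exactly of Bernoulli type with respect to the comparison ODE $\dot y=(1/n+b)y^2$ whose singular lower branch starting from $-\infty$ at $t=0$ is $\phi(t)=-1/((1/n+b)t)$.

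For the maximum-principle step I would work with the perturbed quantity $u_\varepsilon(x,t):=\mathbf{P}_t(x)+1/((1/n+b)t)+\varepsilon t$ for arbitrary $\varepsilon>0$. Since $M$ is compact and $\mathbf{P}_t$ remains bounded as $t\to 0^+$, $u_\varepsilon$ is strictly positive on an initial time-strip. If $u_\varepsilon$ first reached zero at some $(x_0,t_*)$ with $t_*>0$, then at that point $\nabla\mathbf{P}_{t_*}(x_0)=0$, $\Box_t\mathbf{P}_{t_*}(x_0)\ge 0$, and $\partial_tu_\varepsilon(x_0,t_*)\le 0$; substituting $\mathbf{P}_{t_*}(x_0)=-1/((1/n+b)t_*)-\varepsilon t_*$ into the differential inequality and using the two nonnegativity facts yields $\partial_t\mathbf{P}_{t_*}(x_0)\ge(1/n+b)\mathbf{P}_{t_*}(x_0)^2>1/((1/n+b)t_*^{\,2})$, which forces $\partial_tu_\varepsilon(x_0,t_*)>\varepsilon>0$, a contradiction. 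Letting $\varepsilon\to 0^+$ delivers (\ref{3.16}). The corollary (\ref{3.17}) is then immediate: multiply (\ref{3.16}) by the positive factor $f'_tK_t$, use (\ref{2.19}) to identify $f'_tK_t(\Box_tf_t+H_tf_t)=\partial_tf_t$, and read off the $-|\nabla f_t|^2_{h_t}$ contribution directly from the expression (\ref{3.3}) for $\mathbf{P}_t$.

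The main obstacle I foresee is the algebraic bookkeeping in the first step: the ``all-quadratic'' form of the collapsed evolution equation depends on the simultaneous vanishing of $\alpha$, $\beta$, $\beta'$, and $\gamma$ together with the clean value $1+xf''/f'=b$; any sign slip or missed cancellation would leave a stray linear or super-quadratic term which would prevent direct comparison with the Bernoulli ODE. Everything downstream is then a standard Hamilton--Li--Yau-style scalar parabolic argument on a compact manifold.
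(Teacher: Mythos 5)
Your proposal is correct and follows essentially the same route as the paper: specialize (\ref{3.15}) via Lemma \ref{l3.6} (so $\alpha\equiv0$, $\beta\equiv0$, and $1+K_tf''_t/f'_t=b$), apply $|P_t|^{2}_{h_t}\geq\mathbf{P}^{2}_{t}/n$, and compare with the ODE $\dot y=(\frac{1}{n}+b)y^{2}$ by the parabolic maximum principle, then deduce (\ref{3.17}) from (\ref{2.19}) and (\ref{3.3}). The only difference is that the paper simply invokes the maximum principle where you spell out the $\varepsilon$-perturbation argument; the substance is identical.
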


\begin{proof} From the above lemma, we have
\begin{equation*}
\partial_{t}\mathbf{P}_{t}=abK^{b}_{t}\cdot\Box_{t} \mathbf{P}_{t}
+2b\langle\nabla f_{t},\nabla \mathbf{P}_{t}\rangle_{h_{t}}
+|P_{t}|^{2}_{h_{t}}+b\mathbf{P}^{2}_{t}.
\end{equation*}
Since $|P_{t}|^{2}_{h_{t}}\geq\frac{\mathbf{P}^{2}_{t}}{n}$, it follows that
\begin{equation*}
\partial_{t}\mathbf{P}_{t}\geq abK^{b}_{t}\cdot\Box_{t}\mathbf{P}_{t}
+2b\langle\nabla f,_{f}\nabla \mathbf{P}_{t}\rangle_{h_{t}}+\left(\frac{1}{n}+b\right)\mathbf{P}^{2}_{t}.
\end{equation*}
The parabolic maximum principle tells us that
\begin{equation*}
\mathbf{P}_{t}\geq-\frac{1}{\left(\frac{1}{n}+b\right)t}.
\end{equation*}
The last inequality is followed by $\partial_{t}f_{t}=f'_{t}K_{t}(\Box_{t} f_{t}
+f_{t}H_{t})$.
\end{proof}

Now, Theorem \ref{t1.1} follows from the above theorem.

\bibliographystyle{amsplain}

\end{document}